\documentclass[a4paper,12pt]{amsart}
\usepackage{amssymb, amsmath, graphicx}
\usepackage[curve]{xypic}
\usepackage{enumerate}
\usepackage{tikz}
\usepackage{color}
\usetikzlibrary{fadings}
\usetikzlibrary{intersections}
\usetikzlibrary{arrows}
\usetikzlibrary{decorations}
\usetikzlibrary{decorations.pathmorphing}
\usetikzlibrary{decorations.pathreplacing}
\usetikzlibrary{decorations.shapes}
\usetikzlibrary{decorations.markings}
\usetikzlibrary{decorations.text}
\usepackage{xr}

\DeclareRobustCommand{\gobblefour}[4]{}

\providecommand{\abs}[1]{\lvert#1\rvert}

\providecommand{\chr}{\textnormal{char}}

\providecommand{\Z}{\mathbb{Z}}
\providecommand{\Q}{\mathbb{Q}}

\providecommand{\LL}{\mathcal{L}}

\providecommand{\K}{\mathbb{K}}

\makeatletter
\newcommand\tint{\mathop{\mathpalette\tb@int{t}}\!\int}
\newcommand\bint{\mathop{\mathpalette\tb@int{b}}\!\int}
\newcommand\tb@int[2]{%
  \sbox\z@{$\m@th#1\int$}%
  \if#2t%
    \rlap{\hbox to\wd\z@{%
      \hfil
      \vrule width .35em height \dimexpr\ht\z@+1.4pt\relax depth -\dimexpr\ht\z@+1pt\relax
      \kern.05em 
    }}
  \else
    \rlap{\hbox to\wd\z@{%
      \vrule width .35em height -\dimexpr\dp\z@+1pt\relax depth \dimexpr\dp\z@+1.4pt\relax
      \hfil
    }}
  \fi
}
\makeatother

\allowdisplaybreaks

\begin{document}

\title{Graded Identities of the Virasoro Algebra}
\author{Fabio Ferrari Ruffino}
\address{Departamento de Matem\'atica - Universidade Federal de S\~ao Carlos - Rod.\ Washington Lu\'is, Km 235 - C.P.\ 676 - 13565-905 S\~ao Carlos, SP, Brasil}
\email{ferrariruffino@ufscar.br}

\begin{abstract}
We study the graded polynomial identities satisfied by the Virasoro algebra over an infinite field, completing the analysis realised by C.\ Fidelis, D.\ Diniz, and P.\ Koshlukov [\emph{$\Z$-graded identities of the Virasoro Algebra}, Journal of Algebra 640 (2024), 401--431].
\end{abstract}

\maketitle

\newtheorem{Theorem}{Theorem}[section]
\newtheorem{Lemma}[Theorem]{Lemma}
\newtheorem{Prop}[Theorem]{Proposition}
\newtheorem{Corollary}[Theorem]{Corollary}
\newtheorem{ThmDef}[Theorem]{Theorem - Definition}

\theoremstyle{definition}
\newtheorem{Rmk}[Theorem]{Remark}
\newtheorem{Rmks}[Theorem]{Remarks}
\newtheorem{Def}[Theorem]{Definition}
\newtheorem{Not}[Theorem]{Notation}
\newtheorem*{Not*}{Notation}
\newtheorem{Ex}[Theorem]{Example}


\tableofcontents


\section{Introduction}

The Virasoro algebra is an infinite-dimensional $\Z$-graded Lie algebra, which is particularly relevant for its applications both in mathematics and in theoretical physics. In the paper \cite{FDK}, the authors show a minimal family of generators of the graded polynomial identities of this algebra over an infinite field. Nevertheless, there is a small computational mistake in formula (6) that, in spite of not affecting the structure of the paper and of the identities found there, it changes the condition to be verified by the degrees of the variables in each case. Consequently, some proofs require quite different computations and verifications both in characteristic zero and in positive characteristic. Furthermore, in the specific cases of characteristic 5 and 7, we need to add a family of identities to the generators, since certain commutators vanish only in this setting, affecting the structure of the arguments in which they are involved.

Here we prove that the graded identities found in \cite{FDK} generate the T-ideal induced by the Virasoro algebra over an infinite field, starting from the correct version of formula (6) and filling the gap in characteristic 5 and 7. Since the present paper is strictly related to \cite{FDK}, we refer to the introduction of the latter for a summary about graded polynomial identities and about the relevance of the Virasoro algebra within the general theory of p.i.-algebras. Moreover, the reader can find a comprehensive survey on graded Lie algebras in the book \cite{EK}.


\section{Identities in Zero Characteristic}

The Virasoro algebra is defined as follows. We fix a field $\K$ and we assume that $\chr \, \K = 0$ in this section. We consider the $\Z$-graded $\K$-vector space $\LL := \bigoplus_{n \in \Z} \LL_{n}$, where $\LL_{n} = \langle L_{n} \rangle$ for every $n \neq 0$ and $\LL_{0} = \langle L_{0}, \hat{c} \rangle$. We endow $\LL$ with the following Lie algebra structure:
\begin{equation}\label{StructVirasoro}
	[L_{m}, L_{n}] := (m-n)L_{m+n} + \delta_{m+n, 0} C_{m} \hat{c} \qquad\quad [L_{m}, \hat{c}] := 0
\end{equation}
where $C_{m} := K m(m^{2}-1)$ for any fixed $K \in \Q^{*}$. We call $T_{\Z}(\LL)$ the T-ideal formed by the graded polynomial identities of $\LL$. Our aim consists in finding a suitable set of generators of $T_{\Z}(\LL)$.

\subsection{First Family}

We have the following identities:
\begin{equation}\label{PolId1}
	\textcolor{blue}{[x_{1}^{a}, x_{2}^{a}]}
\end{equation}
for every $a \in \Z$. That's because each subspace $\LL_{n}$ is one-dimensional, except for $\LL_{0}$, in which the extra-generator $\hat{c}$ is central.

\begin{Prop}\label{PropMultiCom} If $[x_{1}^{a_{1}}, \ldots, x_{n}^{a_{n}}]$ is an identity of $\LL$, then it belongs to the $T$-ideal generated by the identities \eqref{PolId1}.
\end{Prop}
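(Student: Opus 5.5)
We argue by induction on $n$, splitting according to whether the multicommutator vanishes for purely degree-theoretic reasons or only because of a repeated degree. Recall that commutators are left-normed, $[x_1,\ldots,x_n] = [[x_1,\ldots,x_{n-1}],x_n]$.

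\textbf{Step 1: reduction to a repeated degree.} If some $a_i = a_j$ with $i<j$, we show the commutator lies in the $T$-ideal generated by \eqref{PolId1}. If $j=i+1$, the Jacobi identity gives
\[
[u, x_i^{a}, x_{i+1}^{a}] = [u, [x_i^{a}, x_{i+1}^{a}]] + [u, x_{i+1}^{a}, x_i^{a}] .
\]
So we can move $x_j$ leftwards through $x_{j-1},\ldots,x_{i+1}$, modulo commutators containing $[x_k,x_j]$ as an inner bracket. Those terms are again multicommutators, but in fewer variables (the inner bracket is treated as a single homogeneous variable). Doing this carefully, the commutator becomes congruent to one in which $x_i^a$ and $x_j^a$ are adjacent, hence to an element containing $[x_i^a,x_j^a]$.

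\textbf{Step 2: the general case.} We handle the case of all $a_i$ distinct by working in the free Lie algebra. A left-normed multicommutator is a homogeneous element of degree $a_1+\cdots+a_n$. We evaluate it on $x_i \mapsto L_{a_i}$ (or $\hat c$ when $a_i=0$). If some $x_i$ of degree $0$ is sent to $\hat c$, the value is $0$ automatically, so that substitution gives no information. Sending every variable to $L_{a_i}$, we obtain inductively
\[
[L_{a_1},\ldots,L_{a_n}] = \prod_{k=2}^{n}\bigl(s_{k-1}-a_k\bigr)\,L_{s_n} + (\text{central term}),
\]
where $s_k = a_1+\cdots+a_k$. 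A central term appears only at the last step, when $s_n=0$. After that, any further bracket gives $0$ because $\hat c$ is central.

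For the commutator to be an identity, this value must vanish for every choice in each graded component. By multilinearity and because each $\LL_{a}$ with $a\neq0$ is one-dimensional, the only possible evaluations are scalar multiples of these. Hence the condition is purely numerical: either $s_{k-1}=a_k$ for some $k$, or (in the case $s_n=0$, $k=n$) the combination $(s_{n-1}-a_n)$ together with $C_{s_{n-1}}$ vanishes.

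\textbf{Main obstacle.} The main obstacle is that such numerical vanishing, e.g.\ $s_{k-1}=a_k$, does not by itself look like a consequence of \eqref{PolId1}. The point is that $[x_1,\ldots,x_{k-1}]$ has degree $s_{k-1}=a_k$, so the inner commutator $[[x_1,\ldots,x_{k-1}], x_k]$ is an instance of \eqref{PolId1}: substitute $x_1^{a}\mapsto [x_1,\ldots,x_{k-1}]$ and $x_2^{a}\mapsto x_k$, both of degree $a$. This is legitimate because graded substitutions only need to respect degree. Hence every vanishing caused by a factor $(s_{k-1}-a_k)$ lies in the $T$-ideal.

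The remaining delicate case is $k=n$ with $s_n=0$, where the value is $(2s_{n-1})L_0 + C_{s_{n-1}}\hat c$. This vanishes only if $s_{n-1}=0$, which is again the case $s_{n-1}=a_n$ already treated. I expect this final bookkeeping, covering all $0$-degree variables including substitutions by $\hat c$, to be the place needing the most care. Combining it with Step 1, which covers repeated degrees, gives the proposition.
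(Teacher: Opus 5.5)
Your Step 1 is wrong and should be deleted. As stated it is false: $[x_1^1,x_2^2,x_3^1]$ has a repeated degree, yet $[L_1,L_2,L_1]=-[L_3,L_1]=-2L_4\neq 0$, so it is not even an identity and cannot lie in the $T$-ideal. The argument also fails when restricted to identities. The Jacobi terms $[u,[x_k,x_j],\ldots]$ that you set aside are in general not identities, so no inductive hypothesis applies to them. For example, $[x_1^2,x_2^1,x_3^3,x_4^2]$ is an identity, but moving $x_4$ past $x_3$ produces $[[x_1^2,x_2^1],[x_3^3,x_4^2]]$, which evaluates to $[L_3,L_5]=-2L_8$. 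Moreover, making $x_i,x_j$ adjacent in a left-normed commutator, $[u,x_i,x_j]$, does not produce $[x_i,x_j]$ as an inner bracket.

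Fortunately, nothing in your Step 2 uses that the $a_i$ are distinct. For any identity, the single substitution $x_i\mapsto L_{a_i}$ gives a value whose $L_{s_n}$-coefficient is $\prod_{k=2}^{n}(s_{k-1}-a_k)$. Since $L_{s_n}$ is linearly independent of $\hat c$, this product must vanish, also when $s_n=0$. So the ``delicate'' central-term bookkeeping, the $\hat c$-substitutions and the one-dimensionality remark are all unnecessary. A vanishing factor $s_{k-1}=a_k$ then puts $[x_1,\ldots,x_k]$, and hence the whole commutator, into the $T$-ideal via your substitution into \eqref{PolId1}; that substitution is legitimate.

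With Step 1 removed, this is a complete proof, and it is the paper's argument unrolled. The paper inducts on the prefix $p^b=[x_1^{a_1},\ldots,x_{n-1}^{a_{n-1}}]$. Either $p^b$ is an identity and the induction hypothesis applies, or it evaluates to a nonzero multiple of $L_b$ modulo the centre. In the latter case $b=a_n$, and $[p^b,x_n^b]$ is obtained by substituting into $[x_{n+1}^b,x_n^b]$. You instead compute the explicit product and locate the vanishing factor directly.
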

\begin{proof} Induction in $n$. We call $H$ the $T$-ideal generated by the identities \eqref{PolId1}. The thesis is empty for $n = 1$ and obvious for $n = 2$, since \eqref{StructVirasoro} implies that $[L_{a}, L_{b}] = 0$ if and only if $a = b$. Let us suppose that the result holds for $n-1$ and that $[x_{1}^{a_{1}}, \ldots, x_{n}^{a_{n}}]$ is an identity. We set $b := a_{1} + \cdots + a_{n-1}$ and $p^{b} := [x_{1}^{a_{1}}, \ldots, x_{n-1}^{a_{n-1}}]$, so that $[x_{1}^{a_{1}}, \ldots, x_{n}^{a_{n}}] = [p^{b}, x_{n}^{a_{n}}]$. If $p^{b}$ is an identity, then $p^{b} \in H$ by the inductive hypothesis, hence $[p^{b}, x_{n}^{a_{n}}] \in H$ too. Otherwise, any admissible substitution in $p^{b}$ gives a multiple of $L_{b}$ and, if $b = 0$, of $\hat{c}$. The term $\hat{c}$ is immaterial, since it is central. Hence, $[L_{b}, L_{a_{n}}] = 0$, thus $b = a_{n}$. It follows that $[x_{n+1}^{b}, x_{n}^{b}] \in H$, therefore $[p^{b}, x_{n}^{b}] \in H$ too.
\end{proof}

\subsection{Second Family}

We compute the triple commutators of the non-central generators of $\LL$. We have:
\begin{align*}
	[L_{a}, L_{b}, L_{c}] &= \bigl[(a-b)L_{a+b} + \delta_{a+b, 0} C_{a} \hat{c}, L_{c}\bigr] \nonumber \\
	&= (a-b)[L_{a+b}, L_{c}] = (a-b) \bigl((a+b-c)L_{a+b+c} + \delta_{a+b+c, 0} C_{a+b} \hat{c}\bigr) \nonumber \\
	&= (b-a)(c-a-b)L_{a+b+c} + (b-a) \delta_{a+b+c, 0} C_{c}\hat{c}
\end{align*}
In the last equality, we replaced $C_{a+b}$ with $C_{-c}$ because of the term $\delta_{a+b+c, 0}$, and we observed that $C_{-c} = -C_{c}$.\footnote{In formula (6) of \cite{FDK}, the term $(b-a)$ before $\delta_{a+b+c, 0} C_{c}\hat{c}$ is missing. This leads to an inconsistency, since, for example, by evaluating $[L_{1}, L_{1}, L_{-2}]$ through that formula, we do not get $0$.} By exchanging the indices, we get
\begin{align*}
	& [L_{a}, L_{c}, L_{b}] = (c-a)(b-a-c)L_{a+b+c} + (c-a) \delta_{a+b+c, 0} C_{b}\hat{c} \\
	& [L_{b}, L_{c}, L_{a}] = (c-b)(a-b-c)L_{a+b+c} + (c-b) \delta_{a+b+c, 0} C_{a}\hat{c}
\end{align*}
We set $\beta := (b-a)(c-a-b)$ and $\alpha := (c-a)(b-a-c)$. It follows by direct computation that $\alpha - \beta = (c-b)(a-b-c)$, coherently with the Jacobi identity. Hence, we get:
\begin{align}
	& [L_{a}, L_{b}, L_{c}] = \beta L_{a+b+c} + (b-a) \delta_{a+b+c, 0} C_{c}\hat{c} \label{EqLABC} \\
	& [L_{a}, L_{c}, L_{b}] = \alpha L_{a+b+c} + (c-a) \delta_{a+b+c, 0} C_{b}\hat{c} \label{EqLACB} \\
	& [L_{b}, L_{c}, L_{a}] = (\alpha-\beta)L_{a+b+c} + (c-b) \delta_{a+b+c, 0} C_{a}\hat{c} \label{EqLBCA}
\end{align}

\begin{Lemma}\label{LemmaAlphaBetaZero} We have that:
\begin{itemize}
	\item[(i)] $\beta = 0$ if and only if $[L_{a}, L_{b}, L_{c}] = 0$;
	\item[(ii)] $\alpha = 0$ if and only if $[L_{a}, L_{c}, L_{b}] = 0$;
	\item[(iii)] $\alpha = \beta$ if and only if $[L_{b}, L_{c}, L_{a}] = 0$.
\end{itemize}
\end{Lemma}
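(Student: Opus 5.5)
We work directly from formulas \eqref{EqLABC}, \eqref{EqLACB} and \eqref{EqLBCA}. The three items have the same structure, so we treat (i) in detail and obtain (ii) and (iii) by permuting the roles of $a,b,c$. In (ii) the scalar $\alpha$ is obtained from $\beta$ by exchanging $b$ and $c$. In (iii) the coefficient is $\alpha-\beta=(c-b)(a-b-c)$, which is $\beta$ with $a$ and $b$ swapped and then $a$ and $c$ swapped, i.e.\ the cyclic relabelling $(a,b,c)\mapsto(b,c,a)$. Since $L_{a+b+c}$ and $\hat{c}$ are linearly independent, $[L_{a},L_{b},L_{c}]=0$ holds if and only if two conditions hold: $\beta=0$, and $(b-a)\,\delta_{a+b+c,0}\,C_{c}=0$.

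\emph{The implication $[L_a,L_b,L_c]=0\Rightarrow\beta=0$} is immediate from independence. Note that when $a+b+c=0$ we have $L_{0}\neq 0$, and $L_0$ is independent of $\hat c$, so the coefficient of $L_{a+b+c}$ must still vanish.

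\emph{For the converse}, assume $\beta=(b-a)(c-a-b)=0$; we must show the central term vanishes. This is the only substantive step. If $a+b+c\neq 0$ the central term is already zero. If $b=a$ it is zero as well, because of the factor $(b-a)$; this is exactly where the correction to formula (6) of \cite{FDK} matters. It remains to treat $c=a+b$ together with $a+b+c=0$. Then $2c=0$, so $c=0$ since $\chr\,\K=0$, and $C_{0}=K\cdot 0\cdot(0-1)=0$. Hence the central term vanishes in this case too.

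For (ii) the same case analysis applies with the factor $(c-a)$ and $C_{b}$. The remaining case is $b=a+c$ with $a+b+c=0$, which forces $b=0$ and hence $C_{b}=0$. For (iii) we use $\alpha-\beta=(c-b)(a-b-c)$ and the factor $(c-b)\,C_{a}$. Either $c=b$, or $a=b+c$ together with $a+b+c=0$ forces $a=0$ and $C_{a}=0$.

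The main obstacle is this vanishing of the central term in the converse, and it reduces to the short case analysis above. The only care needed is to verify the factorisation $\alpha-\beta=(c-b)(a-b-c)$ stated before the lemma, and to use that $2x=0\Rightarrow x=0$ in characteristic zero.
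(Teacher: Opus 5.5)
Your proof is correct, and it differs from the paper's in the converse direction. The easy direction is the same in both: a vanishing commutator forces $\beta=0$ because $L_{a+b+c}$ and $\hat{c}$ are linearly independent.

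For the converse, the paper never looks at the central coefficient. From $\beta=0$ it gets $a=b$ or $c=a+b$, and then the commutator vanishes by the first family \eqref{PolId1}. Either $[L_{a},L_{a}]=0$, or $[L_{a},L_{b}]$ is a multiple of $L_{c}$ plus a central term, so its bracket with $L_{c}$ is zero. You instead check that the explicit $\hat{c}$-coefficient $(b-a)\delta_{a+b+c,0}C_{c}$ of \eqref{EqLABC} vanishes in each case, using the factor $(b-a)$ and $C_{0}=0$.

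The paper's route is shorter and does not depend on the exact form of the central term. That is why it can serve as a check on the corrected formula, in the spirit of the footnote about $[L_{1},L_{1},L_{-2}]$. Your route relies on \eqref{EqLABC} being right, and in effect confirms that the corrected coefficient is consistent with \eqref{PolId1}. It also adapts to characteristic $p\neq 2$: when $\bar{c}=\bar{a}+\bar{b}$ and $a+b+c=0$, one gets $2\bar{c}=0$, hence $\bar{c}=0$ and $\bar{C}_{c}=0$.

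One small correction: $2c=0\Rightarrow c=0$ holds in $\Z$ with no hypothesis on $\K$. Characteristic zero is used earlier, to read $\beta=0$ in $\K$ as an equation between integers.
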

\begin{proof} The directions $(\Leftarrow)$ immediately follow from equations \eqref{EqLABC}--\eqref{EqLBCA}. If $\beta = 0$, then $a = b$ or $c = a+b$. In both cases, $[L_{a}, L_{b}, L_{c}] = 0$ because of \eqref{PolId1}. The proof is analogous in the other two cases.
\end{proof}

From equations \eqref{EqLABC} and \eqref{EqLACB}, we get:
\begin{equation}\label{IdAlphaBeta}
	\alpha [L_{a}, L_{b}, L_{c}] - \beta [L_{a}, L_{c}, L_{b}] = \delta_{a+b+c, 0} \bigl( \alpha(b-a)C_{c} - \beta(c-a)C_{b}\bigr) \hat{c}
\end{equation}
If $a + b + c \neq 0$, then the term $\delta_{a+b+c, 0}$ leads to the following identity:
\begin{equation}\label{PolId2}
	\textcolor{blue}{\alpha [x_{1}^{a}, x_{2}^{b}, x_{3}^{c}] - \beta [x_{1}^{a}, x_{3}^{c}, x_{2}^{b}]}
\end{equation}

\begin{Def}\label{DefPiTr} A triple $(a, b, c) \in \Z^{3}$ is called
\begin{itemize}
	\item \emph{strong} if $a + b + c \neq 0$; otherwise, it is called \emph{weak};
	\item \emph{regular} if the three entries are pairwise distinct and none of the three is the sum of the other two; otherwise, it is called \emph{special}.
\end{itemize}
\end{Def}

We are going to show that the identity \eqref{PolId2} is non-trivial only for strong regular triples, since, otherwise, it is not an identity or it is a multiple of one of the following three commutators:
\begin{equation}\label{ThreeComm}
	[x_{1}^{a}, x_{2}^{b}, x_{3}^{c}] \qquad\qquad [x_{1}^{a}, x_{3}^{c}, x_{2}^{b}] \qquad\qquad [x_{2}^{b}, x_{3}^{c}, x_{1}^{a}]
\end{equation}
In the latter case, it falls in proposition \ref{PropMultiCom}.

\begin{Prop}\label{PropTriple} A triple $(a, b, c) \in \Z^{3}$ is:
\begin{enumerate}
	\item strong and regular if and only if formula \eqref{PolId2} is a non-vanishing identity and it is not a multiple of a commutator in \eqref{ThreeComm}; in this case, none of such commutators is an identity;
	\item weak and regular if and only if formula \eqref{PolId2} is not an identity; in this case, none of the commutators in \eqref{ThreeComm} is an identity either; and
	\item special if and only if formula \eqref{PolId2} is an identity and it is a multiple of a commutator in \eqref{ThreeComm}; in this case, at least one of the commutators in \eqref{ThreeComm} is an identity (even if formula \eqref{PolId2} vanishes).
\end{enumerate}
Moreover:
\begin{itemize}
	\item[\emph{(i)}] Case (2) holds if and only if $a + b + c = 0$ and the entries of $(\abs{a}, \abs{b}, \abs{c})$ are pairwise distinct (thus, non-vanishing too).
	\item[\emph{(ii)}] In case (3), $(a, b, c)$ is weak if and only if there exists $k \in \Z$ such that $(a, b, c)$ coincides with one of the following triples: $(k, k, -2k)$, $(k, -2k, k)$, $(-2k, k, k)$, $(k, -k, 0)$, $(k, 0, -k)$, $(0, k, -k)$.
	\item[\emph{(iii)}] Formula \eqref{PolId2} vanishes if and only if there exists $k \in \Z$ such that $(a, b, c)$ takes one of the following forms: $(k, k, k)$, $(k, k, 0)$, $(k, 0, k)$, $(0, k, k)$.
\end{itemize}
\end{Prop}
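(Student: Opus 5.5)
The three cases (1)--(3) are mutually exclusive and exhaustive on both sides. A triple is either strong regular, weak regular, or special. Formula \eqref{PolId2} is either a non-vanishing identity that is not a multiple of a commutator in \eqref{ThreeComm}, or not an identity, or an identity that is a multiple of such a commutator. So it suffices to prove the three forward implications; the converses then follow by exclusion.

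We first translate the combinatorial conditions into conditions on $\alpha$ and $\beta$. Since $\beta=(b-a)(c-a-b)$, $\alpha=(c-a)(b-a-c)$ and $\alpha-\beta=(c-b)(a-b-c)$, the triple is regular exactly when $\alpha\neq 0$, $\beta\neq 0$ and $\alpha\neq\beta$. By Lemma \ref{LemmaAlphaBetaZero}, this says that none of the commutators in \eqref{ThreeComm} is an identity. Here a commutator is an identity iff it vanishes on the $L$'s, because $\hat c$ is central and each $\LL_n$ is spanned by $L_n$ modulo $\hat c$.

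\emph{Strong regular.} The right-hand side of \eqref{IdAlphaBeta} vanishes, so \eqref{PolId2} is an identity. It is non-vanishing because $\alpha\neq0\neq\beta$. It is not a multiple of a single commutator in \eqref{ThreeComm}, since two distinct monomials appear with nonzero coefficients. For $[x_2^b,x_3^c,x_1^a]$ we argue in the free Lie algebra: it is a combination of the other two with both coefficients $\pm1$ by Jacobi, and $\alpha\neq\beta$ rules out proportionality. One should also check that the two commutators in \eqref{PolId2} are linearly independent in the free Lie algebra, which holds when the three variables are distinct.

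\emph{Weak regular.} Here $a+b+c=0$. I claim the coefficient $\alpha(b-a)C_c-\beta(c-a)C_b$ of $\hat c$ in \eqref{IdAlphaBeta} is nonzero, so \eqref{PolId2} is not an identity. This is the main computation. Substitute $c=-a-b$ and factor the resulting polynomial in $a,b$. I expect it to factor, up to a constant times $K$, as a product of linear forms vanishing exactly when two of $|a|,|b|,|c|$ coincide or one entry is zero. That would simultaneously prove (i): the factors $a-b$, $a+b$, $2a+b$, $a+2b$ and so on encode $|a|=|b|$, $|a|=|c|$, $|b|=|c|$. One must also note that for weak triples, pairwise distinct absolute values is equivalent to regularity. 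Indeed, if $a+b+c=0$, then $a=b+c$ iff $a=-a$ iff $a=0$, and $a=-b$ iff $c=0$. So regularity excludes zeros and coincidences, but also needs $|a|\neq|b|$ checked against $a=-b$, which forces $c=0$, a special triple. I will do this factorisation first, as the expected obstacle; if it does not factor cleanly, I will instead check the nonvanishing directly by evaluating \eqref{EqLABC}--\eqref{EqLACB} on the $\hat c$-components.

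\emph{Special.} One of $\alpha$, $\beta$, $\alpha-\beta$ is zero. If $\beta=0$, \eqref{PolId2} equals $\alpha[x_1^a,x_2^b,x_3^c]$ and the latter is an identity by Lemma \ref{LemmaAlphaBetaZero}; the case $\alpha=0$ is symmetric. If $\alpha=\beta\neq0$, \eqref{PolId2} is $\beta\bigl([x_1^a,x_2^b,x_3^c]-[x_1^a,x_3^c,x_2^b]\bigr)=\pm\beta[x_2^b,x_3^c,x_1^a]$ by Jacobi, again an identity. For (ii), we list the solutions of $a=b$, $a=c$, $b=c$, $a=b+c$, $b=a+c$, $c=a+b$ under $a+b+c=0$. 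For example, $a=b$ gives $(k,k,-2k)$, and $c=a+b$ gives $c=0$, hence $(k,-k,0)$; this yields the six families. For (iii), \eqref{PolId2} vanishes iff $\alpha=\beta=0$ (by the independence noted above). Solving the four combinations of $\{a=b \text{ or } c=a+b\}$ and $\{a=c \text{ or } b=a+c\}$ gives $(k,k,k)$, $(k,k,0)$ (from $a=b$, $b=a+c$), $(k,0,k)$, and $a=2a+b+c$... with $(0,k,k)$ arising from $c=a+b$, $b=a+c$.
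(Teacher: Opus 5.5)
Your proposal follows the paper's proof essentially step by step: reduction to the forward implications, regularity $\Leftrightarrow$ $\alpha\beta(\alpha-\beta)\neq 0$ via Lemma \ref{LemmaAlphaBetaZero}, and the nonvanishing of the $\hat c$-coefficient of \eqref{IdAlphaBeta} for weak regular triples; your treatment of case (3), writing \eqref{PolId2} directly as a scalar multiple of a commutator that is an identity, is if anything slightly cleaner than the paper's. The one computation you only announce does go through as you predict: using $a+b+c=0$ to rewrite $b-a-c=2b$ and $c-a-b=2c$ before expanding, the coefficient becomes $2K(c-a)(b-a)bc(c^{2}-b^{2})=-2K\,abc\,(b-a)(c-a)(c-b)$, which for weak triples vanishes exactly when the triple is special (and in (iii) the garbled fourth case should read $b+c=2a+b+c$, i.e.\ $a=0$, giving $(0,k,k)$).
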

\begin{proof} \emph{Step 1:} It immediately follows from the explicit expressions of $\alpha$, $\beta$, and $\alpha-\beta$, shown before equation \eqref{EqLABC}, that $(a, b, c)$ is regular if and only if $\alpha \neq 0$, $\beta \neq 0$ and $\alpha \neq \beta$. By lemma \ref{LemmaAlphaBetaZero}, this is equivalent to state that none of the commutators in \eqref{ThreeComm} is an identity.

\vspace{3pt} \emph{Step 2:} It is enough to prove the direction $(\Rightarrow)$ in each item (1)--(3), since the opposite implication follows from the fact that a triple $(a, b, c)$ necessarily falls in only one of the three cases considered.

\vspace{3pt} (1) Since $(a, b, c)$ is strong, \eqref{PolId2} is an identity by formula \eqref{IdAlphaBeta}. Moreover, since $(a, b, c)$ is regular, step 1 completes the proof.

\vspace{3pt} (2) Since $(a, b, c)$ is weak, formula \eqref{IdAlphaBeta} is an identity if and only if $\alpha(b-a)C_{c} - \beta(c-a)C_{b} = 0$. We get the following equivalent conditions:
\begin{align*}
	& (c-a)(b-a-c)(b-a)c(c^{2}-1) - (b-a)(c-a-b)(c-a)b(b^{2}-1) = 0 \\
	& (c-a)2b(b-a)c(c^{2}-1) - (b-a)2c(c-a)b(b^{2}-1) = 0 \\
	& (c-a)b(b-a)c(c^{2}-1-b^{2}+1) = 0 \\
	& (c-a)b(b-a)c(c-b)(c+b) = 0 \\
	& (c-a)b(b-a)c(c-b)a = 0
\end{align*}
This means that $(a, b, c)$ has to be special, against the hypothesis; in fact, for example, the condition $a=0$ is equivalent to $b+c=a$ under the assumption $a+b+c=0$. Step 1 completes the proof.

\vspace{3pt} (3) Let $(a, c, b)$ be special. If it is strong, then \eqref{PolId2} is an identity by formula \eqref{IdAlphaBeta}. Otherwise, the computation in item 2 shows that \eqref{PolId2} is an identity anyway. By step 1, $\beta = 0$, $\alpha = 0$, or $\alpha = \beta$; in the first case, \eqref{PolId2} is a multiple of $[x_{1}^{a}, x_{2}^{b}, x_{3}^{c}]$; in the second case, it is a multiple of $[x_{1}^{a}, x_{3}^{c}, x_{2}^{b}]$; in the third case, because of the Jacobi identity, it is a multiple of $[x_{2}^{b}, x_{3}^{c}, x_{1}^{a}]$. Again by step 1, one of these commutators is an identity.

\vspace{3pt} \emph{Step 3:} The reader can easily verify statements (i)--(iii) by direct computation.
\end{proof}

Because of the previous proposition, \eqref{PolId2} is a non-trivial identity if and only if $(a, b, c)$ is a strong regular triple, as we anticipated above.

\begin{Not} We denote by $K$ the $T$-ideal generated by the identities \eqref{PolId1} and \eqref{PolId2}, assuming $(a, b, c)$ strong and regular in the latter case. Moreover, given a $T$-ideal $I$ and two polynomials $p, q \notin I$, we denote by $p \sim_{I} q$ the fact that there exists $\lambda \in \K$ such that $p - \lambda q \in I$ (necessarily, $\lambda \neq 0$).
\end{Not}

\begin{Lemma}\label{LemmaTripleNotId} If $p := [x_{1}^{a}, x_{2}^{b}, x_{3}^{c}]$ and $q := [x_{1}^{a}, x_{3}^{c}, x_{2}^{b}]$ are not identities and the triple $(a, b, c)$ is strong or special, then $p \sim_{K} q$.
\end{Lemma}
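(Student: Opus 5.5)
The plan is to translate the hypotheses into conditions on $\alpha$ and $\beta$ via lemma \ref{LemmaAlphaBetaZero}, and then to split according to whether the triple is regular or special. The aim is to show that $p - \lambda q \in K$ with $\lambda = \beta/\alpha$ in the regular case and $\lambda = 1$ in the special case.

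\emph{Reduction to $\alpha, \beta \neq 0$.} Since $p$ and $q$ are not identities, $[L_{a}, L_{b}, L_{c}] \neq 0$ and $[L_{a}, L_{c}, L_{b}] \neq 0$. This is because an admissible substitution in $p$ or $q$ is a multiple of the corresponding commutator of $L$'s. Indeed, any $\hat{c}$-component of a degree-$0$ variable is central and contributes nothing. Hence, by lemma \ref{LemmaAlphaBetaZero}(i)--(ii), we have $\beta \neq 0$ and $\alpha \neq 0$.

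\emph{Case 1: $(a, b, c)$ regular.} By hypothesis the triple is strong or special, so here it is strong and regular. Then formula \eqref{PolId2} is one of the generators of $K$, so $\alpha p - \beta q \in K$. Since $\alpha \neq 0$, this gives $p - \frac{\beta}{\alpha} q \in K$, and therefore $p \sim_{K} q$. This case covers all strong regular triples.

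\emph{Case 2: $(a, b, c)$ special, whether strong or weak.} By Step 1 in the proof of proposition \ref{PropTriple}, a special triple has $\alpha = 0$, $\beta = 0$ or $\alpha = \beta$. Since $\alpha, \beta \neq 0$, we must have $\alpha = \beta$. By lemma \ref{LemmaAlphaBetaZero}(iii), it follows that $[L_{b}, L_{c}, L_{a}] = 0$. By the same remark on $\hat{c}$ as above, this means that $[x_{2}^{b}, x_{3}^{c}, x_{1}^{a}]$ is a graded identity of $\LL$. Proposition \ref{PropMultiCom} then places it in the $T$-ideal generated by \eqref{PolId1}, which is contained in $K$. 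Now the Jacobi identity, in the form
\[
[[x_{1},x_{2}],x_{3}] - [[x_{1},x_{3}],x_{2}] = [x_{1},[x_{2},x_{3}]] = -[[x_{2},x_{3}],x_{1}],
\]
gives $p - q = -[x_{2}^{b}, x_{3}^{c}, x_{1}^{a}] \in K$. Hence $p \sim_{K} q$ with $\lambda = 1$.

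There is no serious obstacle in this argument. The only delicate point is the passage from the vanishing of the commutators of basis elements $L$ to the statement that the corresponding polynomial is, or is not, an identity. This needs the observation that degree-$0$ variables may also be evaluated at $\hat{c}$, which is central and so never affects triple commutators. I would also keep track of the sign in the Jacobi step, although the sign is irrelevant for membership in $K$.
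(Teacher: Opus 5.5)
Your proof is correct and follows essentially the same route as the paper. The strong regular case comes directly from the generator \eqref{PolId2}, dividing by $\alpha \neq 0$, and the special case from showing that $[x_{2}^{b}, x_{3}^{c}, x_{1}^{a}] = q - p$ is an identity, which then lies in $K$ by Proposition~\ref{PropMultiCom}. The only difference is cosmetic: you derive $\alpha = \beta$ explicitly from Step~1 of Proposition~\ref{PropTriple} and Lemma~\ref{LemmaAlphaBetaZero}, whereas the paper simply cites Proposition~\ref{PropTriple}-3 together with the Jacobi identity.
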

\begin{proof} If $(a, b, c)$ is is special, then, considering that $p$ and $q$ are not identities by hypothesis, proposition \ref{PropTriple}-3 and the Jacobi identity imply that $[x_{2}^{b}, x_{3}^{c}, x_{1}^{a}] = q - p$ is an identity, hence $p \sim_{K} q$ by proposition \ref{PropMultiCom}. Otherwise, $(a, b, c)$ is strong and regular, hence identity \eqref{PolId2} implies that $p \sim_{K} q$.
\end{proof}

\begin{Prop}\label{PropMultiL} Every multilinear identity of degree $3$ belongs to $K$.
\end{Prop}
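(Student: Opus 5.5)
A multilinear graded polynomial of degree $3$ in the free Lie algebra, with variables $x_{1}^{a}, x_{2}^{b}, x_{3}^{c}$, is a linear combination of left-normed commutators. The space of multilinear Lie polynomials of degree 3 is two-dimensional. By anticommutativity and the Jacobi identity, every such polynomial can be written as
\[
f = \lambda [x_{1}^{a}, x_{2}^{b}, x_{3}^{c}] + \mu [x_{1}^{a}, x_{3}^{c}, x_{2}^{b}] = \lambda p + \mu q,
\]
with $p, q$ as in lemma \ref{LemmaTripleNotId}. So the plan is to assume $f$ is a graded identity of $\LL$ and show $f \in K$, splitting according to the classification of proposition \ref{PropTriple}.

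\emph{Case 1: one of $p, q, r := [x_{2}^{b}, x_{3}^{c}, x_{1}^{a}] = q - p$ is an identity.} Such a commutator lies in $K$ by proposition \ref{PropMultiCom}. We then rewrite $f$ using the remaining commutator: if $p$ is an identity, modulo $K$ we have $f \equiv \mu q$; if $q$ is, $f \equiv \lambda p$; if $r$ is, $f \equiv (\lambda+\mu)p$. The resulting single commutator multiple is an identity. Either the coefficient is zero, or the commutator itself is an identity and hence lies in $K$ by proposition \ref{PropMultiCom}. This case covers all special triples, by proposition \ref{PropTriple}-3.

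\emph{Case 2: none of $p, q, r$ is an identity,} so $(a,b,c)$ is regular.
\begin{itemize}
\item If the triple is strong, lemma \ref{LemmaTripleNotId} (equivalently identity \eqref{PolId2}) gives $q \equiv (\alpha/\beta) p$ modulo $K$. Hence $f \equiv (\lambda + \mu\alpha/\beta)p$. Since $p$ is not an identity and $f$ is, the coefficient must vanish, so $f \in K$.
\item If the triple is weak and regular, we evaluate $f$ on $L_{a}, L_{b}, L_{c}$ using \eqref{EqLABC} and \eqref{EqLACB}. This gives
\[
(\lambda\beta + \mu\alpha) L_{0} + \bigl(\lambda(b-a)C_{c} + \mu(c-a)C_{b}\bigr)\hat{c} = 0 .
\]
These are two linear equations in $(\lambda, \mu)$. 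Their determinant is, up to sign, $\alpha(b-a)C_{c} - \beta(c-a)C_{b}$, which the computation in proposition \ref{PropTriple}-2 shows is nonzero for weak regular triples. Therefore $\lambda = \mu = 0$ and $f = 0$.
\end{itemize}

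Two points need care. First, degenerate cases where variables share a degree are still multilinear, so the above applies verbatim. Second, a generic graded substitution is a scalar multiple of the $L$'s, plus $\hat c$ in degree $0$. Since $\hat{c}$ is central it contributes nothing to degree-3 commutators, and multilinearity reduces the evaluation to the basis elements, so evaluating at the $L$'s is enough. I expect the main obstacle to be the weak regular subcase, namely making sure the determinant argument matches the nonvanishing established earlier. Everything else is bookkeeping with the Jacobi identity and proposition \ref{PropMultiCom}.
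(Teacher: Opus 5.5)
Your proof is correct and takes essentially the same approach as the paper. You dispose of the cases where one of $p$, $q$, $r=q-p$ is an identity via proposition \ref{PropMultiCom}, which covers all special triples. You use \eqref{PolId2} (lemma \ref{LemmaTripleNotId}) for strong regular triples. For weak regular triples you evaluate on $L_{a}, L_{b}, L_{c}$ and invoke the nonvanishing of $\alpha(b-a)C_{c}-\beta(c-a)C_{b}$ from proposition \ref{PropTriple}-2.

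The differences are only organizational. You split according to which commutator is an identity and use a $2\times 2$ determinant. The paper instead first normalizes to $p+\xi q$ with $\xi\neq 0,-1$, then solves for $\xi$ from the $L_{0}$-coefficient using $\alpha\neq 0$.
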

\begin{proof} We can express any such identity in the form
\begin{equation}\label{IdDeg3}\tag{$\star$}
	\lambda[x_{1}^{a}, x_{2}^{b}, x_{3}^{c}] - \mu[x_{1}^{a}, x_{3}^{c}, x_{2}^{b}],
\end{equation}
excluding the trivial case $\lambda = \mu = 0$. If $\lambda = 0$ or $\mu = 0$, then \eqref{IdDeg3} is a triple commutator. Moreover, if $\lambda = \mu$, then the Jacobi identity implies that \eqref{IdDeg3} is a triple commutator as well. Thus, in these cases, the thesis follows from proposition \ref{PropMultiCom}. For this reason, we can write \eqref{IdDeg3} in the form
\begin{equation}\label{IdDeg3B}\tag{$\star\star$}
	[x_{1}^{a}, x_{2}^{b}, x_{3}^{c}] + \xi[x_{1}^{a}, x_{3}^{c}, x_{2}^{b}], \quad \xi \neq 0, -1
\end{equation}
without loss of generality. We distinguish two cases.

\vspace{3pt} I. \emph{$(a, b, c)$ is strong or special.} If $p := [x_{1}^{a}, x_{2}^{b}, x_{3}^{c}]$ is an identity, then $q := [x_{1}^{a}, x_{3}^{c}, x_{2}^{b}]$ is an identity too because \eqref{IdDeg3B} is, so that proposition \ref{PropMultiCom} implies the thesis; the same argument holds if $q$ is an identity. Therefore, we assume that $p$ and $q$ are not identities. Then, lemma \ref{LemmaTripleNotId} implies that $q = \nu p + k$, with $\nu \in \K$ and $k \in K$, hence $\eqref{IdDeg3B} = (1 + \xi\nu)p + \xi k$. Since $p$ is not an identity, while \eqref{IdDeg3B} and $k$ are, we have $1 + \xi\nu = 0$, thus $\eqref{IdDeg3B} = \xi k \in K$.

\vspace{3pt} II. \emph{$(a, b, c)$ is weak and regular.} From formulas \eqref{EqLABC} and \eqref{EqLACB} we get
	\[0 = [L_{a}, L_{b}, L_{c}] + \xi[L_{a}, L_{c}, L_{b}] = (\beta + \xi\alpha)L_{0} + \bigl((b-a)C_{c} + \xi(c-a)C_{b}\bigr)\hat{c}.
\]
Since $\alpha \neq 0$ by lemma \ref{LemmaAlphaBetaZero}, the coefficient of $L_{0}$ implies $\xi = -\frac{\beta}{\alpha}$, hence the coefficient of $\hat{c}$ implies $\alpha(b-a)C_{c} - \beta(c-a)C_{b} = 0$. By formula \eqref{IdAlphaBeta}, this is exactly the condition that makes \eqref{PolId2} an identity, that is forbidden by proposition \ref{PropTriple}-2. Hence, \eqref{IdDeg3B} cannot be an identity.
\end{proof}

\subsection{Third Family}

Formula \eqref{IdAlphaBeta} shows that \eqref{PolId2} is central, hence we get the following identity for every $d \in \Z$:
\begin{equation}\label{PolId3}
	\textcolor{blue}{\alpha [x_{1}^{a}, x_{2}^{b}, x_{3}^{c}, x_{4}^{d}] - \beta [x_{1}^{a}, x_{3}^{c}, x_{2}^{b}, x_{4}^{d}]}.
\end{equation}
This identity is non-trivial when \eqref{PolId2} is \emph{not} an identity---that is, when $(a, b, c)$ is weak and regular. Moreover, if $d = a + b + c$, then the two quadruple commutators are both identities following from \eqref{PolId1}, hence \eqref{PolId3} is trivial.

\begin{Not} We denote by $J$ the $T$-ideal generated by the identities \eqref{PolId1}, \eqref{PolId2} with $(a, b, c)$ strong and regular, and \eqref{PolId3} with $(a, b, c)$ weak and regular and $d \neq a+b+c$.
\end{Not}

\begin{Lemma}\label{LemmaQuadrNotId} Let $p := [x_{1}^{a}, x_{2}^{b}, x_{3}^{c}, x_{4}^{d}]$ be a quadruple commutator that is not an identity. We set $q := [x_{1}^{a}, x_{3}^{c}, x_{2}^{b}, x_{4}^{d}]$ and $r := [x_{1}^{a}, x_{2}^{b}, x_{4}^{d}, x_{3}^{c}]$.
\begin{itemize}
	\item[(i)] If $q$ is not an identity (equivalently, if $[x_{1}^{a}, x_{3}^{c}, x_{2}^{b}]$ is not an identity), then $p \sim_{J} q$.
	\item[(ii)] If $a + b + c + d \neq 0$ and $r$ is not an identity, then $p \sim_{J} r$.
\end{itemize}
\end{Lemma}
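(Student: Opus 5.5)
For (i), I would reduce to degree three and lift the resulting relation by one further bracket with $x_4^d$. Set $p_0 := [x_1^a, x_2^b, x_3^c]$ and $q_0 := [x_1^a, x_3^c, x_2^b]$, so that $p = [p_0, x_4^d]$ and $q = [q_0, x_4^d]$. Since $p$ and $q$ are not identities, neither are $p_0$ and $q_0$. First I would check the parenthetical equivalence in (i). If $q_0$ is an identity, so is $q$. Conversely, suppose $q_0$ is not an identity. Then $q_0$ evaluates to a nonzero multiple of $L_{a+b+c}$ plus a central term. This is the same direction $L_{a+b+c}$ as for $p_0$, and $p_0$ does not commute with $L_d$, since otherwise $p$ would be an identity. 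Hence $q$ is not an identity either.

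I would then split into cases on the triple $(a,b,c)$.
\begin{itemize}
	\item If $(a,b,c)$ is strong or special, Lemma \ref{LemmaTripleNotId} gives $p_0 - \lambda q_0 \in K \subseteq J$. Bracketing with $x_4^d$ keeps us inside the $T$-ideal, so $p - \lambda q \in J$.
	\item If $(a,b,c)$ is weak and regular, I would use \eqref{PolId3}. The value $d = a+b+c = 0$ is excluded, because then $p$ would be an identity by \eqref{PolId1}, since $p_0$ lives in degree $0$ and its $L_0$-part commutes with $x_4^0$. So \eqref{PolId3} lies in $J$ and gives $\alpha p - \beta q \in J$ with $\alpha, \beta \neq 0$ by regularity, i.e.\ $p \sim_J q$.
\end{itemize}

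For (ii), the idea is to regard $u := [x_1^a, x_2^b]$ as a single homogeneous variable of degree $e := a+b$. Then $p = [u, x_3^c, x_4^d]$ and $r = [u, x_4^d, x_3^c]$, which is exactly the shape treated in degree three for the triple $(e, c, d)$. Since $a+b+c+d \neq 0$, the triple $(e,c,d)$ is strong. Because $p$ is not an identity, $u$ is not an identity, so $a \neq b$ and every evaluation of $u$ is a nonzero multiple of $L_e$ plus possibly a central term.

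Next I would take $y^e$ a fresh variable. The central part of $u$ is killed by the outer brackets, so $[y^e, x_3^c, x_4^d]$ and $[y^e, x_4^d, x_3^c]$ are not identities, since $p$ and $r$ are not. Lemma \ref{LemmaTripleNotId}, applied to the strong triple $(e,c,d)$, gives $[y^e, x_3^c, x_4^d] - \lambda [y^e, x_4^d, x_3^c] \in K$. Substituting $y^e \mapsto u$ is a graded endomorphism, so $p - \lambda r \in K \subseteq J$.

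The main obstacle I expect is justifying this substitution and the non-identity transfer carefully. Evaluations of $u$ span only the line $L_e$ (plus the centre), and the centre is harmless because it is annihilated by the outer brackets. The same point about the central term, which is irrelevant inside outer brackets, needs to be checked in case (i) as well.
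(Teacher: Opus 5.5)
Your proposal is correct and follows the paper's proof essentially verbatim. For (i) you split on whether $(a,b,c)$ is strong or special, in which case Lemma \ref{LemmaTripleNotId} applies and you then bracket with $x_4^d$, or weak and regular, in which case identity \eqref{PolId3} applies, with $d\neq a+b+c$ forced by $p$ not being an identity. For (ii) you apply Lemma \ref{LemmaTripleNotId} to the strong triple $(a+b,c,d)$, with $[x_1^a,x_2^b]$ occupying the first slot. Your extra justifications, namely the parenthetical equivalence and the substitution $y^{a+b}\mapsto[x_1^a,x_2^b]$, just make explicit what the paper leaves implicit.
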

\begin{proof} (i) If $(a, b, c)$ is strong or special, then lemma \ref{LemmaTripleNotId} applies to $[x_{1}^{a}, x_{2}^{b}, x_{3}^{c}]$. If $(a, b, c)$ is weak and regular, then identity \eqref{PolId3} applies (necessarily $d \neq a+b+c$, since $p$ is not an identity by hypothesis).

\vspace{3pt} (ii) The triple $(a + b, c, d)$ cannot be weak because of the hypothesis $a + b + c + d \neq 0$, hence lemma \ref{LemmaTripleNotId} applies to $[[x_{1}^{a}, x_{2}^{b}], x_{3}^{c}, x_{4}^{d}]$.
\end{proof}

In any Lie algebra, the obvious identity $[[x_{1}^{a}, x_{2}^{b}], [x_{3}^{c}, x_{4}^{d}]] + [[x_{3}^{c}, x_{4}^{d}], [x_{1}^{a}, x_{2}^{b}]] = 0$ is equivalent to
\begin{equation}\label{IdLie4}
	[x_{1}^{a}, x_{2}^{b}, x_{3}^{c}, x_{4}^{d}] + [x_{2}^{b}, x_{1}^{a}, x_{4}^{d}, x_{3}^{c}] + [x_{4}^{d}, x_{3}^{c}, x_{2}^{b}, x_{1}^{a}] + [x_{3}^{c}, x_{4}^{d}, x_{1}^{a}, x_{2}^{b}] = 0.
\end{equation}
This can be easily proven by applying the Jacobi identity to the triples $x_{1}^{a}, x_{2}^{b}, [x_{3}^{c}, x_{4}^{d}]$ and $x_{3}^{c}, x_{4}^{d}, [x_{1}^{a}, x_{2}^{b}]$.

\begin{Prop}\label{PropMultId4Sp} Any multilinear identity $p(x_{1}^{a}, x_{2}^{b}, x_{3}^{c}, x_{4}^{d})$ such that $a + b + c + d \neq 0$ belongs to $J$.
\end{Prop}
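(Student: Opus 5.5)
We reduce a multilinear identity to a normal form of left-normed commutators with $x_1^a$ in first position, and then use Lemma \ref{LemmaQuadrNotId} to collapse everything onto a single commutator.

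By the standard basis of the multilinear part of the free Lie algebra, write
\[
p = \sum_{\sigma \in S_{3}} \lambda_{\sigma} [x_{1}^{a}, x_{\sigma(2)}, x_{\sigma(3)}, x_{\sigma(4)}],
\]
where $\sigma$ ranges over permutations of $\{2,3,4\}$ and the degrees are suppressed. Every commutator appearing here has total degree $n := a+b+c+d \neq 0$. Hence every substitution gives a multiple of $L_{n}$ with no $\hat{c}$-component. In particular, each left-normed quadruple commutator $p_{\sigma}$ either is an identity, and then lies in $H \subseteq J$ by Proposition \ref{PropMultiCom}, or evaluates to a nonzero multiple of $L_{n}$ on the substitution $x_{i}\mapsto L_{\deg}$. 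So we may discard the commutators that are identities and assume every $p_\sigma$ left in the sum is not an identity.

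The key step is to show that all the remaining $p_{\sigma}$ are pairwise $\sim_{J}$-equivalent. Two permutations of $(2,3,4)$ are related by a chain of adjacent transpositions: swapping the positions $2\leftrightarrow 3$ is governed by Lemma \ref{LemmaQuadrNotId}(i), and swapping $3\leftrightarrow 4$ by Lemma \ref{LemmaQuadrNotId}(ii), which applies precisely because $n\neq 0$. The main obstacle is that an intermediate commutator along the chain might be an identity, which breaks the chain.

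To handle this, I would argue through the Cayley graph of $S_{3}$ on adjacent transpositions, which is a hexagon. Among the six commutators, the identities are exactly those whose first two degrees coincide or whose partial sums satisfy the relevant equality conditions. Using Jacobi, each removed vertex only forces relations among its neighbours: if $[x_1,x_i,x_j]$ is an identity, then $[x_1,x_j,x_i]$ equals $-[x_i,x_j,x_1]$ modulo $H$, and the commutators $[x_{i}, x_{j}, x_{1}, x_{k}]$ can be re-expanded into the normal form. If the non-identity vertices are disconnected in the hexagon, I would connect them via \eqref{IdLie4} or the Jacobi relation, expressing one commutator as a combination of the others modulo $H$. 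If this fails for some degree pattern, a direct check on degrees should show the corresponding components are independent, so that no mixing is needed.

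Once all surviving $p_{\sigma}$ satisfy $p_{\sigma} = \nu_{\sigma} p_{0} + k_{\sigma}$ with $k_{\sigma}\in J$, we get
\[
p = \Bigl(\sum \lambda_{\sigma}\nu_{\sigma}\Bigr) p_{0} + k, \qquad k \in J.
\]
Evaluating at $x_{i}\mapsto L_{\deg}$ kills $p$ and $k$ but not $p_{0}$. Hence the coefficient of $p_{0}$ vanishes and $p\in J$, exactly as in case I of Proposition \ref{PropMultiL}. If no $p_\sigma$ survives, then $p\in H$ directly.
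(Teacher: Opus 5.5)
Your framework is the paper's. You expand $p$ in left-normed commutators with a fixed leading variable, observe that for $n=a+b+c+d\neq 0$ each non-identity commutator evaluates to a nonzero multiple of $L_{n}$, and reduce to showing that all non-identity commutators are $\sim_{J}$-equivalent. The closing evaluation argument is fine. But that reduction step is the entire content of the proposition, and you do not prove it.

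Lemma \ref{LemmaQuadrNotId} only links two hexagon neighbours when both are non-identities, and the surviving vertices really can be disconnected. For $(a,b,c,d)=(-2,1,3,4)$:
\begin{itemize}
\item $[x_{1}^{-2},x_{2}^{1},x_{3}^{3},x_{4}^{4}]$ is not an identity;
\item both its neighbours, $[x_{1}^{-2},x_{3}^{3},x_{2}^{1},x_{4}^{4}]$ and $[x_{1}^{-2},x_{2}^{1},x_{4}^{4},x_{3}^{3}]$, are identities;
\item yet $[x_{1}^{-2},x_{3}^{3},x_{4}^{4},x_{2}^{1}]$ is not an identity.
\end{itemize}
Choosing a different leading variable does not fix this in general. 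For $(a,b,c,d)=(3,-2,4,1)$, take all $24$ left-normed commutators, with antisymmetry and both parts of Lemma \ref{LemmaQuadrNotId} as edges. Then $[x_{4}^{1},x_{1}^{3},x_{2}^{-2},x_{3}^{4}]$ and $[x_{4}^{1},x_{3}^{4},x_{1}^{3},x_{2}^{-2}]$ still lie in different components. Bridging such gaps needs genuinely new relations, for example:
\begin{itemize}
\item \eqref{IdLie4}, together with a check that the relevant $[[x_{i},x_{j}],[x_{k},x_{l}]]$ is not an identity; or
\item the Jacobi identity followed by Lemma \ref{LemmaTripleNotId}, applied with an inner bracket $[x_{i},x_{j}]$ in the last slot.
\end{itemize}
Whether these succeed depends on the arithmetic of $(a,b,c,d)$.

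Your fallback, that in a failing case ``the components are independent, so that no mixing is needed'', cannot occur. Since $\LL_{n}$ is one-dimensional, any two non-identity commutators $p_{1},p_{2}$ with $p_{i}\mapsto\mu_{i}L_{n}$ give the identity $\mu_{2}p_{1}-\mu_{1}p_{2}$. Its membership in $J$ is precisely $p_{1}\sim_{J}p_{2}$. So an unbridgeable pair would be a counterexample to the statement, not a harmless case. This is what happens in characteristic $5$ and $7$: the chains break at such places, and the extra families \eqref{PolId3Car5} and \eqref{PolId3Car7} must be added.

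What is missing is the paper's case analysis:
\begin{itemize}
\item fix the leading variable so that $d\neq a+b+c$ and $[x_{4}^{d},x_{1}^{a},x_{2}^{b},x_{3}^{c}]$ is not an identity;
\item treat the other five orderings one by one;
\item handle the exceptional patterns, e.g.\ $(2c,-c,c,3c)$, $(a,3a,4a,-2a)$, $(a,-a,3a,2a)$, $(3d,-2d,4d,d)$, $(-2d,3d,4d,d)$, by explicit chains through commutators with other leading variables.
\end{itemize}
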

\begin{proof} We can suppose that one of the indices $a, b, c, d$ is not the sum of the others, otherwise we would get a linear system of four equations whose only solution is $(0, 0, 0, 0)$. Up to a permutation, we assume that
\begin{equation}\label{CondABCD1}\tag{$*$}
	d \neq a + b + c.
\end{equation}
We can express $p$ as a linear combination of monomials of the form
	\[N_{\sigma} := [x_{4}^{d}, x_{\sigma(1)}^{\sigma(a)}, x_{\sigma(2)}^{\sigma(b)}, x_{\sigma(3)}^{\sigma(c)}],
\]
with $\sigma \in S_{3}$. If every such permutation is an identity, then proposition \ref{PropMultiCom} implies the result. Hence, up to a permutation of $\{a, b, c\}$, we assume that $[x_{4}^{d}, x_{1}^{a}, x_{2}^{b}, x_{3}^{c}]$ is not an identity. Thus:
\begin{equation}\label{CondABCD2}\tag{$**$}
	\textnormal{(i)} \;\, d \neq a \qquad \textnormal{(ii)} \;\, d + a \neq b \qquad \textnormal{(iii)} \;\, d + a + b \neq c
\end{equation}
Our aim consists in proving that, for every $\sigma \in S_{3}$, the monomial $N_{\sigma}$ is an identity (hence, proposition \ref{PropMultiCom} applies) or $N_{\sigma} \sim_{J} [x_{4}^{d}, x_{1}^{a}, x_{2}^{b}, x_{3}^{c}]$. It follows that the identity $p$ satisfies $p = j + \lambda [x_{4}^{d}, x_{1}^{a}, x_{2}^{b}, x_{3}^{c}]$, where $j \in J$ and $\lambda \in \K$. Since $j$ is an identity too, while $[x_{4}^{d}, x_{1}^{a}, x_{2}^{b}, x_{3}^{c}]$ is not, we get $\lambda = 0$, thus $p \in J$, as desired. We have to consider the five non-trivial elements of $S_{3}$, assuming in each case that $N_{\sigma}$ is not an identity.

\vspace{3pt} I. $N_{\sigma} = [x_{4}^{d}, x_{2}^{b}, x_{1}^{a}, x_{3}^{c}]$. Since $[x_{4}^{d}, x_{1}^{a}, x_{2}^{b}, x_{3}^{c}]$ is not an identity by hypothesis, the result immediately follows from lemma \ref{LemmaQuadrNotId}-i.

\vspace{3pt} II. $N_{\sigma} = [x_{4}^{d}, x_{1}^{a}, x_{3}^{c}, x_{2}^{b}]$. Since $[x_{4}^{d}, x_{1}^{a}, x_{2}^{b}, x_{3}^{c}]$ is not an identity by hypothesis, the result immediately follows from lemma \ref{LemmaQuadrNotId}-ii.

\vspace{3pt} III. $N_{\sigma} = [x_{4}^{d}, x_{2}^{b}, x_{3}^{c}, x_{1}^{a}]$. We deal with this case in three steps.

\vspace{3pt} \emph{Step 1:} If $[x_{4}^{d}, x_{2}^{b}, x_{1}^{a}, x_{3}^{c}]$ is not an identity, then by lemma \ref{LemmaQuadrNotId}-ii we fall in case I. Otherwise, we have $d = b$, $d+b = a$, or $d+b+a=c$; the latter is forbidden by \eqref{CondABCD2}-iii and $d = b$ makes $N_{\sigma}$ an identity. Thus, only $d+b=a$ is possible.

\vspace{3pt} \emph{Step 2:} By equation \eqref{IdLie4} and assuming $d+b=a$, we get:
\begin{align}
	& [x_{4}^{d}, x_{2}^{b}, x_{3}^{c}, x_{1}^{a}] + [x_{2}^{b}, x_{4}^{d}, x_{1}^{a}, x_{3}^{c}] + [x_{1}^{a}, x_{3}^{c}, x_{2}^{b}, x_{4}^{d}] + [x_{3}^{c}, x_{1}^{a}, x_{4}^{d}, x_{2}^{b}] = 0 \nonumber \\
	& N_{\sigma} + 0 + [x_{1}^{a}, x_{3}^{c}, x_{2}^{b}, x_{4}^{d}] - [x_{1}^{a}, x_{3}^{c}, x_{4}^{d}, x_{2}^{b}] = 0 \nonumber \\
	& N_{\sigma} = [x_{1}^{a}, x_{3}^{c}, x_{4}^{d}, x_{2}^{b}] - [x_{1}^{a}, x_{3}^{c}, x_{2}^{b}, x_{4}^{d}] \label{EqStep3IV}
\end{align}
If $[x_{1}^{a}, x_{3}^{c}, x_{4}^{d}, x_{2}^{b}]$ is not an identity, then by lemma \ref{LemmaQuadrNotId}-ii or proposition \ref{PropMultiCom} we have $N_{\sigma} \sim_{J} [x_{1}^{a}, x_{3}^{c}, x_{4}^{d}, x_{2}^{b}]$. Otherwise, we have $a=c$, $a+c=d$ or $a+c+d=b$; the first case makes $N_{\sigma}$ an identity by \eqref{EqStep3IV}. Unless $a = d$ or $a+d=c$, lemma \ref{LemmaQuadrNotId}-i implies that $N_{\sigma} \sim_{J} [x_{1}^{a}, x_{4}^{d}, x_{3}^{c}, x_{2}^{b}] = -[x_{4}^{d}, x_{1}^{a}, x_{3}^{c}, x_{2}^{b}]$. The condition $a = d$ is forbidden by \eqref{CondABCD2}-i. When no obstructions are present, we fall in case II.

\vspace{3pt} \emph{Step 3:} Let us see when the two previous steps do not apply. Failure of step 1 is equivalent to $d+b=a$; failure of step 2 is equivalent to $a+c=d$, $a+c+d=b$, or $a+d=c$. We analyse these cases.
\begin{enumerate}
	\item\label{FirstCase} $d+b=a$ and $a+c=d$: we have $(a, b, c, d) = (a, a-d, d-a, d)$. By equation \eqref{EqStep3IV}, we have $N_{\sigma} = -[x_{1}^{a}, x_{3}^{d-a}, x_{2}^{a-d}, x_{4}^{d}]$. Lemma \ref{LemmaQuadrNotId}-i implies $N_{\sigma} \sim_{J} [x_{1}^{a}, x_{2}^{a-d}, x_{3}^{d-a}, x_{4}^{d}]$, unless $a = a-d$ or $a+(a-d)=(d-a)$. The first case leads to $d = 0$, against \eqref{CondABCD2}-ii; the second case is equivalent to $3a=2d$, leading to $(a, b, c, d) = (2c, -c, c, 3c)$. We will analyse this case below. Now $N_{\sigma} \sim_{J} [x_{1}^{a}, x_{2}^{a-d}, x_{4}^{d}, x_{3}^{d-a}]$ by lemma \ref{LemmaQuadrNotId}-ii, since $a=(a-d)$ has already been excluded, $a+(a-d)=d$ leads to $a = d$, against \eqref{CondABCD2}-i, and $a+b+d=c$ is excluded by \eqref{CondABCD2}-iii. Lastly, $N_{\sigma} \sim_{J} [x_{1}^{a}, x_{4}^{d}, x_{2}^{a-d}, x_{3}^{d-a}] = -[x_{4}^{d}, x_{1}^{a}, x_{2}^{a-d}, x_{3}^{d-a}]$, as desired, by lemma \ref{LemmaQuadrNotId}-i, since $a=d$ is excluded by \eqref{CondABCD2}-i and $a+d=a-d$ leads to $d = 0$, that has already been excluded.
	\vspace{3pt} \item $(a, b, c, d) = (2c, -c, c, 3c)$: by applying iteratively lemma \ref{LemmaQuadrNotId}, we get $N_{\sigma} = [x_{4}^{3c}, x_{2}^{-c}, x_{3}^{c}, x_{1}^{2c}] \sim_{J} [x_{4}^{3c}, x_{3}^{c}, x_{2}^{-c}, x_{1}^{2c}] \sim_{J} [x_{4}^{3c}, x_{3}^{c}, x_{1}^{2c}, x_{2}^{-c}] \sim_{J} [x_{4}^{3c}, x_{1}^{2c}, x_{3}^{c}, x_{2}^{-c}] \sim_{J} [x_{4}^{3c}, x_{1}^{2c}, x_{2}^{-c}, x_{3}^{c}]$, as required.
	\vspace{3pt} \item $d+b=a$ and $a+c+d=b$: we have $(a, b, c, d) = (a, a-d, -2d, d)$. By equation \eqref{EqStep3IV}, $N_{\sigma} = -[x_{1}^{a}, x_{3}^{-2d}, x_{2}^{a-d}, x_{4}^{d}]$. By lemma \ref{LemmaQuadrNotId}-i, $N_{\sigma} \sim_{J} [x_{1}^{a}, x_{2}^{a-d}, x_{3}^{-2d}, x_{4}^{d}]$, unless $a=a-d$ or $a+(a-d)=-2d$; in the first case, $d = 0$, against \eqref{CondABCD2}-ii; in the second case, that we will analyse below, $d=-2a$. By lemma \ref{LemmaQuadrNotId}-ii, $N_{\sigma} \sim_{J} [x_{1}^{a}, x_{2}^{a-d}, x_{4}^{d}, x_{3}^{-2d}]$, unless $a+(a-d)=d$---that is, $a = d$---against \eqref{CondABCD2}-i, or $a+(a-d)+d=-2d$---that is, $a = -d$---against \eqref{CondABCD2}-iii. Lastly, by lemma \ref{LemmaQuadrNotId}-i, $N_{\sigma} \sim_{J} [x_{1}^{a}, x_{4}^{d}, x_{2}^{a-d}, x_{3}^{-2d}] = -[x_{4}^{d}, x_{1}^{a}, x_{2}^{a-d}, x_{3}^{-2d}]$, as desired, unless $d = a$, against \eqref{CondABCD2}-i, or $d+a=a-d$---that is, $d = 0$---against \eqref{CondABCD2}-ii.
	\vspace{3pt} \item $d+b=a$, $a+c+d=b$, and $d=-2a$: we have $(a, b, c, d) = (a, 3a, 4a, -2a)$, hence $N_{\sigma} = [x_{4}^{-2a}, x_{2}^{3a}, x_{3}^{4a}, x_{1}^{a}]$. By lemma \ref{LemmaQuadrNotId}-i, $N_{\sigma} \sim_{J} [x_{4}^{-2a}, x_{3}^{4a}, x_{2}^{3a}, x_{1}^{a}]$. By applying the Jacobi identity, we get $N_{\sigma} \sim_{J} [x_{2}^{3a}, x_{1}^{a}, [x_{4}^{-2a}, x_{3}^{4a}]] = [x_{4}^{-2a}, x_{3}^{4a}, [x_{1}^{a}, x_{2}^{3a}]]$. By lemma \ref{LemmaTripleNotId}, we have $N_{\sigma} \sim_{J} [x_{4}^{-2a}, [x_{1}^{a}, x_{2}^{3a}], x_{3}^{4a}] = -[x_{1}^{a}, x_{2}^{3a}, x_{4}^{-2a}, x_{3}^{4a}]$. By lemma \ref{LemmaQuadrNotId}-i, $N_{\sigma} \sim_{J} [x_{1}^{a}, x_{4}^{-2a}, x_{2}^{3a}, x_{3}^{4a}] = -[x_{4}^{-2a}, x_{1}^{a}, x_{2}^{3a}, x_{3}^{4a}]$, as required.
	\vspace{3pt} \item $d+b=a$ and $a+d=c$: we get $(a, b, c, d) = (a, a-d, a+d, d)$, hence $N_{\sigma} = [x_{4}^{d}, x_{2}^{a-d}, x_{3}^{a+d}, x_{1}^{a}] = -[x_{2}^{a-d}, x_{4}^{d}, x_{3}^{a+d}, x_{1}^{a}]$. We have $N_{\sigma} \sim_{J} [x_{2}^{a-d}, x_{3}^{a+d}, x_{4}^{d}, x_{1}^{a}]$ by lemma \ref{LemmaQuadrNotId}-i, unless $a-d=a+d$---that is, $d = 0$---against \eqref{CondABCD2}-ii, or $(a-d)+(a+d)=d$---that is, $d = 2a$---that we will analyse below. Lemma \ref{LemmaQuadrNotId}-ii implies that $N_{\sigma} \sim_{J} [x_{2}^{a-d}, x_{3}^{a+d}, x_{1}^{a}, x_{4}^{d}]$, unless $a-d=a+d$, that we have already excluded, $(a-d)+(a+d)=a$---that is, $a = 0$---falling in case \eqref{FirstCase}, or $(a-d)+(a+d)+a=d$---that is, $d = 3a$---against \eqref{CondABCD1}. By lemma \ref{LemmaQuadrNotId}-i, we have $N_{\sigma} \sim_{J} [x_{2}^{a-d}, x_{1}^{a}, x_{3}^{a+d}, x_{4}^{d}] = -[x_{1}^{a}, x_{2}^{a-d}, x_{3}^{a+d}, x_{4}^{d}]$, unless $a-d=a$, that we have already excluded, or $(a-d)+a=a+d$---that is, $a=2d$---that makes $N_{\sigma}$ an identity (since $d=b$). By lemma \ref{LemmaQuadrNotId}-ii, we have $N_{\sigma} \sim_{J} [x_{1}^{a}, x_{2}^{a-d}, x_{4}^{d}, x_{3}^{a+d}]$, since the conditions $a+(a-d)=d$ and $a+(a-d)+d=(a+d)$ are both equivalent to $a=d$, that is forbidden by \eqref{CondABCD2}-i. Lastly, lemma \ref{LemmaQuadrNotId}-i implies that $N_{\sigma} \sim_{J} [x_{1}^{a}, x_{4}^{d}, x_{2}^{a-d}, x_{3}^{a+d}] = -[x_{4}^{d}, x_{1}^{a}, x_{2}^{a-d}, x_{3}^{a+d}]$, as desired, since \eqref{CondABCD2} rules out any obstruction.
	\vspace{3pt} \item $d+b=a$, $a+d=c$ and $d = 2a$: we get $(a, b, c, d) = (a, -a, 3a, 2a)$, hence $N_{\sigma} = [x_{4}^{2a}, x_{2}^{-a}, x_{3}^{3a}, x_{1}^{a}]$. By applying lemma \ref{LemmaQuadrNotId}, we get $N_{\sigma} \sim_{J} [x_{4}^{2a}, x_{3}^{3a}, x_{2}^{-a}, x_{1}^{a}] \sim_{J} [x_{4}^{2a}, x_{3}^{3a}, x_{1}^{a}, x_{2}^{-a}] = -[x_{3}^{3a}, x_{4}^{2a}, x_{1}^{a}, x_{2}^{-a}] \sim_{J} [x_{3}^{3a}, x_{1}^{a}, x_{4}^{2a}, x_{2}^{-a}] = -[x_{1}^{a}, x_{3}^{3a}, x_{4}^{2a}, x_{2}^{-a}]$ $\sim_{J} [x_{1}^{a}, x_{3}^{3a}, x_{2}^{-a}, x_{4}^{2a}] \sim_{J} [x_{1}^{a}, x_{2}^{-a}, x_{3}^{3a}, x_{4}^{2a}] \sim_{J} [x_{1}^{a}, x_{2}^{-a}, x_{4}^{2a}, x_{3}^{3a}] \sim_{J} [x_{1}^{a}, x_{4}^{2a}, x_{2}^{-a},$ $x_{3}^{3a}] = -[x_{4}^{2a}, x_{1}^{a}, x_{2}^{-a}, x_{3}^{3a}]$, as required.
\end{enumerate}

\vspace{3pt} IV. $N_{\sigma} = [x_{4}^{d}, x_{3}^{c}, x_{1}^{a}, x_{2}^{b}]$. We deal with this case in four steps.

\vspace{3pt} \emph{Step 1:} If $[x_{4}^{d}, x_{1}^{a}, x_{3}^{c}]$ is not an identity, then by lemma \ref{LemmaQuadrNotId}-i we fall in case II. Otherwise, $d = a$ our $d+a = c$. By \eqref{CondABCD2}-i, only $d+a=c$ is possible.

\vspace{3pt} \emph{Step 2:} If we can apply lemma \ref{LemmaQuadrNotId}-ii and i respectively, we get $N_{\sigma} \sim_{J} [x_{4}^{d}, x_{3}^{c}, x_{2}^{b}, x_{1}^{a}] \sim_{J} [x_{4}^{d}, x_{2}^{b}, x_{3}^{c}, x_{1}^{a}]$, falling in case III. About the first equivalence, the possible obstructions are $d = c$, $d+c=b$, or $d+c+b=a$. We can neglect the case $d = c$, since it makes $N_{\sigma}$ an identity. About the second equivalence, the possible obstructions are $d = b$ and $d+b=c$.

\vspace{3pt} \emph{Step 3:} From equation \eqref{IdLie4} we get:
\begin{align}
	& [x_{4}^{d}, x_{3}^{c}, x_{1}^{a}, x_{2}^{b}] + [x_{3}^{c}, x_{4}^{d}, x_{2}^{b}, x_{1}^{a}] + [x_{2}^{b}, x_{1}^{a}, x_{3}^{c}, x_{4}^{d}] + [x_{1}^{a}, x_{2}^{b}, x_{4}^{d}, x_{3}^{c}] = 0 \nonumber \\
	& [x_{4}^{d}, x_{3}^{c}, x_{1}^{a}, x_{2}^{b}] - [x_{4}^{d}, x_{3}^{c}, x_{2}^{b}, x_{1}^{a}] = [x_{1}^{a}, x_{2}^{b}, x_{3}^{c}, x_{4}^{d}] - [x_{1}^{a}, x_{2}^{b}, x_{4}^{d}, x_{3}^{c}] \label{EqStep3}
\end{align}
We call $A$ and $B$ respectively the l.h.s.\ and the r.h.s.\ of the latter equality. The Jacobi identity implies that $A = B = [[x_{4}^{d}, x_{3}^{c}], [x_{1}^{a}, x_{2}^{b}]]$. We make two assumptions: (i) $A$ (or $B$) is not an identity and (ii) the commutator $[x_{1}^{a}, x_{2}^{b}, x_{4}^{d}, x_{3}^{c}]$ is not an identity. By (i), lemma \ref{LemmaQuadrNotId}-ii applied to $[x_{4}^{d}, x_{3}^{c}, x_{2}^{b}, x_{1}^{a}]$ (if the latter is not an identity) implies that $A \sim_{J} N_{\sigma}$, hence $N_{\sigma} \sim_{J} B$. By (ii), lemma \ref{LemmaQuadrNotId}-ii applied to $[x_{1}^{a}, x_{2}^{b}, x_{3}^{c}, x_{4}^{d}]$ (if the latter is not an identity) implies that $B \sim_{J} [x_{1}^{a}, x_{2}^{b}, x_{4}^{d}, x_{3}^{c}]$. Since $a \neq b$ by (i) and $a+d \neq b$ by \eqref{CondABCD2}-ii, lemma \ref{LemmaQuadrNotId}-i implies that $B \sim_{J} [x_{1}^{a}, x_{4}^{d}, x_{2}^{b}, x_{3}^{c}] = -[x_{4}^{d}, x_{1}^{a}, x_{2}^{b}, x_{3}^{c}]$, hence we get the result. Assumption (i) fails if $a = b$, $c = d$, or $a+b=c+d$; again, $c=d$ can be neglected, since $N_{\sigma}$ would be an identity. Assumption (ii) fails if $a=b$, $a+b=d$, or $a+b+d=c$, the latter case being forbidden by \eqref{CondABCD2}-iii.

\vspace{3pt} \emph{Step 4:} Let us see when none of the previous steps applies. Failure of step 1 is equivalent to $d+a=c$; failure of step 2 is equivalent to $d+c=b$, $d+c+b=a$, $d = b$, or $d+b=c$; failure of step 3 is equivalent to $a=b$, $a+b=c+d$, or $a+b=d$. We analyse these cases.
\begin{enumerate}
	\item $d+a=c$ and $a=b$ (any choice for step 2). We have $(a, b, c, d) = (a, a, a+d, d)$. Hence, $N_{\sigma} = [x_{4}^{d}, x_{3}^{a+d}, x_{1}^{a}, x_{2}^{a}]$. By applying the Jacobi identity to $[x_{4}^{d}, x_{3}^{a+d}, x_{1}^{a}]$, we get $N_{\sigma} = -[x_{3}^{a+d}, x_{1}^{a}, x_{4}^{d}, x_{2}^{a}] = [x_{4}^{d}, [x_{3}^{a+d}, x_{1}^{a}], x_{2}^{a}]$. We can apply lemma \ref{LemmaTripleNotId}, because $a+b+c+d \neq 0$ by hypothesis and $[x_{4}^{d}, x_{2}^{a}, [x_{3}^{a+d}, x_{1}^{a}]]$ cannot be an identity, since $d = a$ is forbidden by \eqref{CondABCD2}-i and $d+a=a+d+a$---that is, $a = 0$---makes $N_{\sigma}$ an identity (since $d = a+d$). Therefore, $N_{\sigma} \sim_{J} [x_{4}^{d}, x_{2}^{a}, [x_{3}^{a+d}, x_{1}^{a}]]$.
	
	Moreover, $[x_{4}^{d}, x_{1}^{a}, x_{2}^{b}, x_{3}^{c}] = [x_{4}^{d}, x_{1}^{a}, x_{2}^{a}, x_{3}^{a+d}] \sim_{J} [x_{4}^{d}, x_{2}^{a}, x_{1}^{a}, x_{3}^{a+d}]$. By applying the Jacobi identity, we get $-[x_{1}^{a}, x_{3}^{a+d}, [x_{4}^{d}, x_{2}^{a}]] = [x_{4}^{d}, x_{2}^{a}, [x_{3}^{a+d}, x_{1}^{a}]]$. The previous paragraph implies the result.
	\vspace{3pt}\label{FirstCaseIV} \item $d+a=c$, $d+c=b$, and $a+b=c+d$. We get $(a, b, c, d) = (0, 2d, d, d)$, hence $N_{\sigma}$ is an identity (since $d=c$).
	\vspace{3pt} \item $d+a=c$, $d+c=b$, and $a+b=d$. We get $(a, b, c, d) = (-c, 3c, c, 2c)$, hence $N_{\sigma} = [x_{4}^{2c}, x_{3}^{c}, x_{1}^{-c}, x_{2}^{3c}]$. Equation \eqref{EqStep3} implies that $N_{\sigma} = [x_{1}^{-c}, x_{2}^{3c}, x_{3}^{c}, x_{4}^{2c}]$. Now we apply lemma \ref{LemmaQuadrNotId} as follows: $N_{\sigma} = -[x_{2}^{3c}, x_{1}^{-c}, x_{3}^{c}, x_{4}^{2c}] \sim_{J} [x_{2}^{3c}, x_{1}^{c}, x_{3}^{-c}, x_{4}^{2c}] \sim_{J} [x_{2}^{3c}, x_{1}^{c}, x_{4}^{2c}, x_{3}^{-c}] \sim_{J} [x_{2}^{3c}, x_{4}^{2c}, x_{1}^{c}, x_{3}^{-c}] = -[x_{4}^{2c}, x_{2}^{3c}, x_{1}^{c}, x_{3}^{-c}] \sim_{J} [x_{4}^{2c}, x_{2}^{3c}, x_{1}^{-c}, x_{3}^{c}] \sim_{J} [x_{4}^{2c}, x_{1}^{-c}, x_{2}^{3c}, x_{3}^{c}]$, as required.
	\vspace{3pt} \item $d+a=c$, $d+c+b=a$, and $a+b=c+d$: we get $(a, b, c, d) = (a, 0, a, 0)$, against \eqref{CondABCD2}-iii.
	\vspace{3pt} \item $d+a=c$, $d+c+b=a$, and $a+b=d$: we get $(a, b, c, d) = (3d, -2d, 4d, d)$, hence $N_{\sigma} = [x_{4}^{d}, x_{3}^{4d}, x_{1}^{3d}, x_{2}^{-2d}]$. By applying the Jacobi identity to $[x_{4}^{d}, x_{3}^{4d}, x_{2}^{3d}]$, we get $N_{\sigma} = -[x_{3}^{4d}, x_{2}^{3d}, x_{4}^{d}, x_{1}^{-2d}] = [x_{2}^{3d}, x_{3}^{4d}, x_{4}^{d}, x_{1}^{-2d}]$. From identity \eqref{IdLie4} we get
		\[[x_{1}^{3d}, x_{3}^{4d}, x_{4}^{d}, x_{2}^{-2d}] + [x_{3}^{4d}, x_{1}^{3d}, x_{2}^{-2d}, x_{4}^{d}] = -[x_{2}^{-2d}, x_{4}^{d}, x_{3}^{4d}, x_{1}^{3d}] - [x_{4}^{d}, x_{2}^{-2d}, x_{1}^{3d}, x_{3}^{4d}].
	\]
	The two sides are not identities (since $3d \neq 4d$, $d \neq -2d$, and $3d+4d \neq d-2d$). In the l.h.s., we have $[x_{3}^{4d}, x_{1}^{3d}, x_{2}^{-2d}, x_{4}^{d}] = -[x_{1}^{3d}, x_{3}^{4d}, x_{2}^{-2d}, x_{4}^{d}] \sim_{J} [x_{1}^{3d}, x_{3}^{4d}, x_{4}^{d}, x_{2}^{-2d}]$ by lemma \ref{LemmaQuadrNotId}-ii, thus $\textnormal{l.h.s.} \sim_{J} N_{\sigma}$ by the computation above. In the r.h.s., the first term vanishes (since $-2d+d+4d=3d$), hence we have $N_{\sigma} \sim_{J} [x_{4}^{d}, x_{2}^{-2d}, x_{1}^{3d}, x_{3}^{4d}]$. From lemma \ref{LemmaQuadrNotId}-i we get $N_{\sigma} \sim_{J} [x_{4}^{d}, x_{1}^{3d}, x_{2}^{-2d}, x_{3}^{4d}]$, as desired.
	\vspace{3pt} \item $d+a=c$, $d = b$, and $a+b=c+d$: we get $(a, b, c, d) = (a, 0, a, 0)$, against \eqref{CondABCD2}-iii.
	\vspace{-10pt} \item $d+a=c$, $d = b$, and $a+b=d$: we get $(a, b, c, d) = (0, d, d, d)$, that makes $N_{\sigma}$ an identity (since $d = c$).
	\vspace{3pt} \item $d+a=c$, $d+b=c$ (any choice for step 3): we fall in case \eqref{FirstCaseIV}.
\end{enumerate}

\vspace{3pt} V. $N_{\sigma} = [x_{4}^{d}, x_{3}^{c}, x_{2}^{b}, x_{1}^{a}]$. We deal with this case in four steps.

\vspace{3pt} \emph{Step 1:} If $[x_{4}^{d}, x_{1}^{b}, x_{3}^{c}]$ is not an identity, then by lemma \ref{LemmaQuadrNotId}-i we fall in case III. Otherwise, we have $d = b$ or $d+b = c$.

\vspace{3pt} \emph{Step 2:} If $[x_{4}^{d}, x_{3}^{c}, x_{1}^{a}, x_{2}^{b}]$ is not an identity, then by lemma \ref{LemmaQuadrNotId}-ii we fall in case IV. Otherwise, we have $d = c$, $d+c=a$, or $d+c+a=b$. The first case makes $N_{\sigma}$ an identity.

\vspace{3pt} \emph{Step 3:} From equation \eqref{IdLie4} we get:
\begin{align}
	& [x_{4}^{d}, x_{3}^{c}, x_{2}^{b}, x_{1}^{a}] + [x_{3}^{c}, x_{4}^{d}, x_{1}^{a}, x_{2}^{b}] + [x_{1}^{a}, x_{2}^{b}, x_{3}^{c}, x_{4}^{d}] + [x_{2}^{b}, x_{1}^{a}, x_{4}^{d}, x_{3}^{c}] = 0 \nonumber \\
	& [x_{4}^{d}, x_{3}^{c}, x_{2}^{b}, x_{1}^{a}] - [x_{4}^{d}, x_{3}^{c}, x_{1}^{a}, x_{2}^{b}] = [x_{1}^{a}, x_{2}^{b}, x_{4}^{d}, x_{3}^{c}] - [x_{1}^{a}, x_{2}^{b}, x_{3}^{c}, x_{4}^{d}] \label{EqStep3V}
\end{align}
We suppose that the two sides $A = B = [[x_{4}^{d}, x_{3}^{c}], [x_{2}^{b}, x_{1}^{a}]]$ of \eqref{EqStep3V} are not identities and $[x_{1}^{a}, x_{2}^{b}, x_{4}^{d}, x_{3}^{c}]$ is not an identity either. In this case, since $N_{\sigma}$ is not an identity by hypothesis, we have $A \sim_{J} N_{\sigma}$ by lemma \ref{LemmaQuadrNotId}-ii applied to $[x_{4}^{d}, x_{3}^{c}, x_{1}^{a}, x_{2}^{b}]$ (if the latter is not an identity), hence $N_{\sigma} \sim_{J} B \sim_{J} [x_{1}^{a}, x_{2}^{b}, x_{4}^{d}, x_{3}^{c}]$ again by lemma \ref{LemmaQuadrNotId}-ii. By lemma \ref{LemmaQuadrNotId}-i, we have $N_{\sigma} \sim_{J} [x_{1}^{a}, x_{4}^{d}, x_{2}^{b}, x_{3}^{c}] = -[x_{4}^{d}, x_{1}^{a}, x_{2}^{b}, x_{3}^{c}]$, as required, since $a = d$ and $a+d=b$ are forbidden by \eqref{CondABCD2}-(i, ii). The hypotheses fail if $a=b$, $c=d$, $a+b=c+d$, or $a+b=d$ (since $a+b+d=c$ is forbidden by \eqref{CondABCD2}-ii), but $c = d$ makes $N_{\sigma}$ an identity.

\vspace{3pt} \emph{Step 4:} Let us see when none of the previous steps applies. Failure of step 1 is equivalent to $d = b$ or $d+b = c$; failure of step 2 is equivalent to $d+c=a$ or $d+c+a=b$; failure of step 3 is equivalent to $a=b$, $a+b=c+d$, or $a+b=d$. We analyse these cases.
\begin{enumerate}
	\item If $d = b$, then it is enough to consider step 3. In fact, if $a = b$, then we get $a = d$, against \eqref{CondABCD2}-i. If $a+b=c+d$, then we get $a=c$, hence $N_{\sigma} = [x_{4}^{d}, x_{3}^{a}, x_{2}^{d}, x_{1}^{a}]$. By applying iteratively lemma \ref{LemmaQuadrNotId}, we get $N_{\sigma} \sim_{J} [x_{4}^{d}, x_{3}^{a}, x_{1}^{a}, x_{2}^{d}] \sim_{J} [x_{4}^{d}, x_{1}^{a}, x_{3}^{a}, x_{2}^{d}] \sim_{J} [x_{4}^{d}, x_{1}^{a}, x_{2}^{d}, x_{3}^{a}]$, as required, unless $d = a$, that makes $N_{\sigma}$ an identity, $d+a=a$---that is, $d=0$---against \eqref{CondABCD2}-iii, or $d+a+a=d$---that is, $a=0$---against \eqref{CondABCD2}-ii. Lastly, if $a+b=d$, then we get $a=0$, hence $a+d=b$, against \eqref{CondABCD2}-ii.
	\item If $a = b$, then it is enough to consider step 2. In fact, if $d+c=a$, then $d+c=b$, hence $N_{\sigma}$ is an identity. If $d+c+a=b$, then $d+c+b=a$, hence $N_{\sigma}$ is an identity.
	\item If $d+b=c$, $d+c=a$, and $a+b=c+d$, then we get $(a, b, c, d) = (2d, 0, d, d)$, that makes $N_{\sigma}$ an identity (since $c=d$).
	\item If $d+b=c$, $d+c+a=b$, and $a+b=c+d$, then we get $(a, b, c, d) = (0, b, b, 0)$, against \eqref{CondABCD2}-i.
	\item If $d+b=c$, $d+c=a$, and $a+b=d$, then we get $(a, b, c, d) = (3c, -c, c, 2c)$. We have $N_{\sigma} = [x_{4}^{2c}, x_{3}^{c}, x_{2}^{-c}, x_{1}^{3c}]$. By applying the Jacobi identity to $[x_{4}^{2c}, x_{3}^{c}, x_{2}^{-c}]$, we get $N_{\sigma} = -[x_{3}^{c}, x_{2}^{-c}, x_{4}^{2c}, x_{1}^{3c}]$. By applying iteratively lemma \ref{LemmaQuadrNotId}, we get $N_{\sigma} \sim_{J} [x_{3}^{c}, x_{2}^{-c}, x_{1}^{3c}, x_{4}^{2c}] \sim_{J} [x_{3}^{c}, x_{1}^{3c}, x_{2}^{-c}, x_{4}^{2c}] = -[x_{1}^{3c}, x_{3}^{c}, x_{2}^{-c}, x_{4}^{2c}] \sim_{J} [x_{1}^{3c}, x_{3}^{c}, x_{4}^{2c}, x_{2}^{-c}] \sim_{J} [x_{1}^{3c}, x_{4}^{2c}, x_{3}^{c}, x_{2}^{-c}] = -[x_{4}^{2c}, x_{1}^{3c}, x_{3}^{c}, x_{2}^{-c}] \sim_{J} [x_{4}^{2c}, x_{1}^{3c}, x_{2}^{-c}, x_{3}^{c}]$, as required.
	\item If $d+b=c$, $d+c+a=b$, and $a+b=d$, then we get $(a, b, c, d) = (-2d, 3d, 4d, d)$, hence $N_{\sigma} = [x_{4}^{d}, x_{3}^{4d}, x_{2}^{3d}, x_{1}^{-2d}]$. By applying the Jacobi identity to $[x_{4}^{d}, x_{3}^{4d}, x_{2}^{3d}]$, we get $N_{\sigma} = -[x_{3}^{4d}, x_{2}^{3d}, x_{4}^{d}, x_{1}^{-2d}] = [x_{2}^{3d}, x_{3}^{4d}, x_{4}^{d}, x_{1}^{-2d}]$. From identity \eqref{IdLie4} we get
		\[[x_{2}^{3d}, x_{3}^{4d}, x_{4}^{d}, x_{1}^{-2d}] + [x_{3}^{4d}, x_{2}^{3d}, x_{1}^{-2d}, x_{4}^{d}] = -[x_{1}^{-2d}, x_{4}^{d}, x_{3}^{4d}, x_{2}^{3d}] - [x_{4}^{d}, x_{1}^{-2d}, x_{2}^{3d}, x_{3}^{4d}].
	\]
	The two sides are not identities (since $3d \neq 4d$, $d \neq -2d$, and $3d+4d \neq d-2d$). In the l.h.s., we have $[x_{3}^{4d}, x_{2}^{3d}, x_{1}^{-2d}, x_{4}^{d}] = -[x_{2}^{3d}, x_{3}^{4d}, x_{1}^{-2d}, x_{4}^{d}] \sim_{J} [x_{2}^{3d}, x_{3}^{4d}, x_{4}^{d}, x_{1}^{-2d}]$ by lemma \ref{LemmaQuadrNotId}-ii, thus $\textnormal{l.h.s.} \sim_{J} N_{\sigma}$ by the computation above. In the r.h.s., the first term vanishes (since $-2d+d+4d=3d$), hence we have $N_{\sigma} \sim_{J} [x_{4}^{d}, x_{1}^{-2d}, x_{2}^{3d}, x_{3}^{4d}]$, as desired. \qedhere
\end{enumerate}
\end{proof}

\subsection{Fourth Family}

We consider a quadruple commutator $[x_{1}^{a}, x_{2}^{b}, x_{3}^{c}, x_{4}^{d}]$. From formula \eqref{IdAlphaBeta}, by replacing $L_{a}$, $L_{b}$, and $L_{c}$ respectively with $[L_{a}, L_{b}]$, $L_{c}$, and $L_{d}$, we get:
\begin{equation}\label{IdAlphaBetaD1}
	\alpha_{1} [L_{a}, L_{b}, L_{c}, L_{d}] - \beta_{1} [L_{a}, L_{b}, L_{d}, L_{c}] = \delta_{a+b+c+d, 0} \bigl( \alpha_{1}(c-a-b)C_{d} - \beta_{1}(d-a-b)C_{c}\bigr) \hat{c}.
\end{equation}
Similarly, by replacing $L_{a}$, $L_{b}$, and $L_{c}$ respectively with $[L_{a}, L_{d}]$, $L_{c}$, and $L_{b}$, we get:
\begin{equation}\label{IdAlphaBetaD2}
	\alpha_{2} [L_{a}, L_{d}, L_{c}, L_{b}] - \beta_{2} [L_{a}, L_{d}, L_{b}, L_{c}] = \delta_{a+b+c+d, 0} \bigl( \alpha_{2}(c-a-d)C_{b} - \beta_{2}(b-a-d)C_{c}\bigr) \hat{c}.
\end{equation}
Obviously, if $a=b$ or $a=d$, then we get identities that follow from \eqref{PolId1}; moreover, if $a+b+c+d \neq 0$, then we get identities that follow from \eqref{PolId2}. Hence, we assume $a \neq b$, $a \neq d$, and $a+b+c+d = 0$. In this case, $(a+b, c, d)$ and $(a+d, c, b)$ are weak and they have to be regular, otherwise we get identities that follow from \eqref{PolId1} by proposition \ref{PropTriple}-3. For these reasons, we give the following definition.
\begin{Def} A quadruple $(a, b, c, d) \in \Z^{4}$ is called \emph{weakly regular quadruple} if $a \neq b$, $a \neq d$, $a+b+c+d = 0$ and the triples $(a+b, c, d)$ and $(a+d, c, b)$ are regular.
\end{Def}
For example, $(1, 2, 4, -7)$ is a weakly regular quadruple. In this case, we set $k_{1} := \alpha_{1}(c-a-b)C_{d} - \beta_{1}(d-a-b)C_{c}$ and $k_{2} := \alpha_{2}(c-a-d)C_{b} - \beta_{2}(d-a-b)C_{c}$. From equations \eqref{IdAlphaBetaD1} and \eqref{IdAlphaBetaD2}, we get the following identity:
	\[k_{2}\bigl(\alpha_{1} [x_{1}^{a}, x_{2}^{b}, x_{3}^{c}, x_{4}^{d}] - \beta_{1} [x_{1}^{a}, x_{2}^{b}, x_{4}^{d}, x_{3}^{c}]\bigr) - k_{1}\bigl(\alpha_{2} [x_{1}^{a}, x_{4}^{d}, x_{3}^{c}, x_{2}^{b}] - \beta_{2} [x_{1}^{a}, x_{4}^{d}, x_{2}^{b}, x_{3}^{c}]\bigr)
\]
Moreover, $\alpha_{3} [x_{1}^{a}, x_{2}^{b}, x_{4}^{d}, x_{3}^{c}] - \beta_{3} [x_{1}^{a}, x_{4}^{d}, x_{2}^{b}, x_{3}^{c}] \in J$ because of formula \eqref{PolId3}, and $\alpha_{3}, \beta_{3} \neq 0$ since $(a, b, c, d)$ is weakly regular. Hence, we set $h_{1} := k_{2}\alpha_{1}$, $h_{2} := k_{1}\beta_{2}\frac{\alpha_{3}}{\beta_{3}}-k_{2}\beta_{1}$, and $h_{3} := -k_{1}\alpha_{2}$, and we get the identity
\begin{equation}\label{PolId4}
	\textcolor{blue}{h_{1} [x_{1}^{a}, x_{2}^{b}, x_{3}^{c}, x_{4}^{d}] + h_{2} [x_{1}^{a}, x_{2}^{b}, x_{4}^{d}, x_{3}^{c}] + h_{3} [x_{1}^{a}, x_{4}^{d}, x_{3}^{c}, x_{2}^{b}]},
\end{equation}
where $h_{1}, h_{3} \neq 0$ and none of the commutators involved is an identity.

Up to now we got the following identities:
\begin{itemize}
	\item \eqref{PolId1} for every $a \in \Z$
	\item \eqref{PolId2} for every strong regular triple $(a, b, c)$
	\item \eqref{PolId3} for every weak regular triple $(a, b, c)$ and every $d \neq a + b + c$
	\item \eqref{PolId4} for every weakly regular quadruple $(a, b, c, d)$
\end{itemize}
Let us prove that these identities generate the $T$-ideal formed by all of the $\Z$-graded identities of $\LL$.

\begin{Not} We denote by $I$ the $T$-ideal generated by \eqref{PolId1}, \eqref{PolId2}, \eqref{PolId3}, and \eqref{PolId4}. Moreover, given an $n$-uple $\pmb{a} = (a_{1}, \ldots, a_{n}) \in \Z^{n}$, we denote by $P_{n}^{\pmb{a}}$ the space of multilinear polynomials of degree $n$ in the variables $x_{1}^{a_{1}}, \ldots, x_{n}^{a_{n}}$, and by $\bar{P}_{n}^{\pmb{a}}$ the subspace of $P_{n}^{\pmb{a}}$ formed by the graded identities of $\LL$.
\end{Not}

\begin{Prop}\label{PropMultId4} Any multilinear identity $p(x_{1}^{a}, x_{2}^{b}, x_{3}^{c}, x_{4}^{d})$ belongs to $I$.
\end{Prop}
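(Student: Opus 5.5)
By Proposition \ref{PropMultId4Sp}, the case $a+b+c+d \neq 0$ is already settled, since $J \subseteq I$. So I assume $a+b+c+d = 0$. This weak case is the only one in which the central term $\hat{c}$ can appear in degree $4$, and it is where \eqref{PolId4} must be used.

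\textbf{Reduction to a weakly regular quadruple.} Since $\bar{P}_{4}^{\pmb{a}}$ is spanned modulo the Jacobi identity by the six left-normed commutators $[x_{1}^{a}, x_{\sigma(2)}, x_{\sigma(3)}, x_{\sigma(4)}]$, $\sigma \in S_{3}$, I write $p$ as a linear combination of these six monomials. First I discard every monomial that is an identity; these lie in $I$ by Proposition \ref{PropMultiCom}. For the remaining monomials I use Lemma \ref{LemmaQuadrNotId}-i, which swaps the second and third entries, and the analogues of \eqref{IdAlphaBetaD1}--\eqref{IdAlphaBetaD2}, which swap the last two entries. When the relevant triple is special, Proposition \ref{PropTriple}-3 applies, and when it is strong the swap holds modulo $J$. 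If no triple $(a+\sigma(b), \ldots)$ is weak and regular at the relevant position, then every swap identifies the monomials up to scalars modulo $J$. In that situation the argument of Proposition \ref{PropMultId4Sp} goes through: one reduces $p$ to $j + \lambda N$ with $N$ not an identity, and so $\lambda = 0$. The cases where the pairwise relations fail should be exactly those where, for some ordering of $b, c, d$, the quadruple is weakly regular. This amounts to $a \neq b$, $a \neq d$, and both $(a+b, c, d)$ and $(a+d, c, b)$ being weak and regular.

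\textbf{The weakly regular case.} Here I would show that $\bar{P}_{4}^{\pmb{a}}$ modulo $J$ has dimension $1$ and is spanned by \eqref{PolId4}. Modulo $J$, identity \eqref{PolId3} together with Lemma \ref{LemmaQuadrNotId} and \eqref{IdLie4} should reduce the six monomials to at most three independent ones, namely $M_{1} = [x_{1}^{a}, x_{2}^{b}, x_{3}^{c}, x_{4}^{d}]$, $M_{2} = [x_{1}^{a}, x_{2}^{b}, x_{4}^{d}, x_{3}^{c}]$ and $M_{3} = [x_{1}^{a}, x_{4}^{d}, x_{3}^{c}, x_{2}^{b}]$. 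Under the evaluation $x_{1} \mapsto L_{a}, \ldots$, each $M_{i}$ goes to $\mu_{i} L_{0} + \nu_{i}\hat{c}$. A combination $\sum \lambda_{i} M_{i}$ is therefore an identity if and only if $(\lambda_{i})$ lies in the kernel of the $2 \times 3$ matrix $\bigl(\begin{smallmatrix}\mu_{1}&\mu_{2}&\mu_{3}\\ \nu_{1}&\nu_{2}&\nu_{3}\end{smallmatrix}\bigr)$. Multilinearity and the infinite field let one pass to this single evaluation, because each graded component is spanned by $L_{n}$ and possibly $\hat{c}$. I would check that the matrix has rank $2$; the kernel is then one-dimensional and spanned by the coefficient vector $(h_{1}, h_{2}, h_{3})$ of \eqref{PolId4}. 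Hence $p \equiv \lambda\cdot\eqref{PolId4} \pmod J$, and so $p \in I$.

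\textbf{Main obstacle.} I expect the main obstacle to be twofold. The first part is the case bookkeeping: one must verify that every quadruple with $a+b+c+d=0$ which is not weakly regular (for any relabelling) allows a chain of $\sim_{J}$ equivalences joining all non-identity monomials. Borderline configurations will again need ad hoc applications of \eqref{IdLie4}, as in Proposition \ref{PropMultId4Sp}. The second part is the rank-$2$ claim. If the rank were $1$, then \eqref{PolId3} would already force everything into $J$, so rank $1$ is harmless; the genuine danger is only that the three monomials are not independent modulo $J$. I would first verify this reduction, and then handle the rank through the explicit values $C_{m} = Km(m^{2}-1)$.
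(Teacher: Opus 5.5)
\textbf{Gap: the dichotomy between ``all monomials connected'' and ``weakly regular'' misses an intermediate case.}

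Your weakly regular branch is sound, and a bit more direct than the paper's Step~2. Once some relabelling is weakly regular, the six monomials with that leading variable collapse modulo $J$ onto $M_1,M_2,M_3$, by Lemma~\ref{LemmaQuadrNotId}-i and Proposition~\ref{PropMultiCom}. The rank of your $2\times 3$ matrix is automatically $2$: since $(a+b,c,d)$ is weak and regular, $M_1$ and $M_2$ are functionally independent by Propositions~\ref{PropTriple}-2 and~\ref{PropMultiL}. (Rank $1$ would not be harmless, since it would leave a two-dimensional space of identities, but it cannot occur.)

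The problem is the complementary branch. Your claim that pairwise relations fail exactly when some relabelling is weakly regular is false. So is the bookkeeping claim that every non-weakly-regular quadruple with $a+b+c+d=0$ lets all non-identity monomials be chained by $\sim_{J}$. Take $(a,b,c,d)=(1,2,-1,-2)$. Since the sum is $0$, a triple $(a_i+a_j,a_k,a_l)$ is regular if and only if $\abs{a_k},\abs{a_l},\abs{a_k+a_l}$ are distinct and nonzero. Here this holds only for the pairs $\{1,2\}$ and $\{-1,-2\}$. No entry lies in two such pairs, so no relabelling is weakly regular. Nevertheless $[x_1^{1},x_2^{2},x_3^{-1},x_4^{-2}]\mapsto -16L_0-4C_2\hat{c}$ and $[x_1^{1},x_2^{2},x_4^{-2},x_3^{-1}]\mapsto -10L_0$. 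These are non-identities that are linearly independent as functions, because the triple $(3,-1,-2)$ is weak and regular. Hence no chain of $\sim_{J}$ can join them, and the reduction ``$p=j+\lambda N$, hence $\lambda=0$'' is unavailable.

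\textbf{What is missing.} You need an argument for the situation where two classes survive modulo $J$ without any weakly regular relabelling. There you must prove that the two surviving classes are always functionally independent, so that no nonzero identity lies in their span. This is exactly the content of the paper's Step~3, Case~II. Whenever $W$ is two-dimensional, one relabels so that a basis is $[x_4^{d},x_1^{a},x_2^{b},x_3^{c}]$, $[x_4^{d},x_1^{a},x_3^{c},x_2^{b}]$ with $d\neq a$ and $(d+a,b,c)$ regular. Any identity in their span then comes from a degree-$3$ identity in $K$ by Proposition~\ref{PropMultiL}. The long case analysis (1)--(4) there is what shows such a relabelling always exists; the leftover configurations either collapse the second class onto the first or make $\alpha$ an identity.

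Similarly, your assertion that three surviving classes force a weakly regular relabelling is the paper's Step~2. That requires its own case analysis, including the non-obvious relabelling $(a,d,c,b)$ with $d+c=a$. You state it but do not prove it.
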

\begin{proof} Because of proposition \ref{PropMultId4Sp}, we can suppose
\begin{equation}\label{EqSumZero}\tag{$*$}
	a+b+c+d=0.
\end{equation}
We set $\pmb{a} := (a, b, c, d)$. We have to prove that $\bar{P}_{4}^{\pmb{a}} \subset I$, or, equivalently, $\bar{P}_{4}^{\pmb{a}} = \bar{P}_{4}^{\pmb{a}} \cap I$. This means that the dimension of the vector space $V := \frac{\bar{P}_{4}^{\pmb{a}}}{\bar{P}_{4}^{\pmb{a}} \cap I}$ has to be $0$. We also set $W := \frac{P_{4}^{\pmb{a}}}{P_{4}^{\pmb{a}} \cap I}$, so that $V \subset W$ (since the denominators coincide, because the elements of $I$ are identities).

\vspace{3pt} \emph{Step 1:} $\dim W \leq 3$. In fact, $P_{4}^{\pmb{a}}$ is generated by monomials of the form $N_{\sigma} := [x_{4}^{d}, x_{\sigma(1)}^{\sigma(a)}, x_{\sigma(2)}^{\sigma(b)}, x_{\sigma(3)}^{\sigma(c)}]$, where $\sigma \in S_{3}$, hence $\dim W \leq 6$. If $[x_{4}^{d}, x_{\sigma(1)}^{\sigma(a)}, x_{\sigma(2)}^{\sigma(b)}, x_{\sigma(3)}^{\sigma(c)}]$ and $[x_{4}^{d}, x_{\sigma(2)}^{\sigma(b)}, x_{\sigma(1)}^{\sigma(a)}, x_{\sigma(3)}^{\sigma(c)}]$ are not identities, then they are equivalent modulo $I$ by lemma \ref{LemmaQuadrNotId}-i; therefore, only one of the two can belong to a basis of $W$. If one of the two is an identity, then it belongs to $I$ by proposition \ref{PropMultiCom}, hence only the other one can belong to a basis of $W$. It follows that a basis of $W$ is formed at most by the commutators corresponding to $(d, a, b, c)$ or $(d, b, a, c)$, $(d, a, c, b)$ or $(d, c, a, b)$, and $(d, b, c, a)$ or $(d, c, b, a)$; therefore, we have at most $3$ elements.

\vspace{3pt} \emph{Step 2:} $\dim W \leq 2$. Let us suppose by contradiction that $\dim W = 3$. Up to exchange the variables, we assume that $[x_{4}^{d}, x_{1}^{a}, x_{2}^{b}, x_{3}^{c}]$ is not an identity. It is not restrictive to suppose that a basis of $V$ is formed by
	\[\alpha := [x_{4}^{d}, x_{1}^{a}, x_{2}^{b}, x_{3}^{c}] \qquad \beta := [x_{4}^{d}, x_{1}^{a}, x_{3}^{c}, x_{2}^{b}] \qquad \gamma := [x_{4}^{d}, x_{3}^{c}, x_{2}^{b}, x_{1}^{a}].
\]
Indeed, up to a permutation of $(a, b, c)$, let us suppose that there are two elements of the form $[x_{4}^{d}, x_{1}^{a}, \ldots]$. In this case, they are necessarily $\alpha$ and $\beta$. Up to exchange $b$ and $c$, we can suppose that the third element is of the form $[x_{4}^{d}, x_{3}^{c}, \ldots]$. The latter is necessarily $\gamma$, since, if $[x_{4}^{d}, x_{3}^{c}, x_{1}^{a}, x_{2}^{b}]$ is not an identity, then it is equivalent to $\beta$ by lemma \ref{LemmaQuadrNotId}-i. It remains to verity that taking two elements of the form $[x_{4}^{d}, x_{1}^{a}, \ldots]$ is not restrictive. In fact, if the three elements are of the form $\alpha := [x_{4}^{d}, x_{1}^{a}, x_{2}^{b}, x_{3}^{c}]$, $\mu := [x_{4}^{d}, x_{2}^{b}, \ldots]$, and $\nu := [x_{4}^{d}, x_{3}^{c}, \ldots]$ (up to exchange $b$ and $c$), then necessarily $\mu = [x_{4}^{d}, x_{2}^{b}, x_{3}^{c}, x_{4}^{a}]$ (otherwise, it would be equivalent to $\alpha$ or an identity), thus necessarily $\nu = [x_{4}^{d}, x_{3}^{c}, x_{1}^{a}, x_{2}^{b}]$ (otherwise, it would be equivalent to $\mu$ or an identity). If we can apply lemma \ref{LemmaQuadrNotId}-i to one of the three, then we get a repeated index in the second position, hence we fall in the case above. Otherwise, $(d, b, a, c)$, $(d, c, b, a)$ and $(d, a, c, b)$ correspond to identities. About the first one, if $d = b$, then $\mu$ is an identity, and, if $d+b+a=c$, then $\alpha$ is an identity; thus, only $d+b=a$ is possible. Similarly, we get $d+c=b$ and $d+a=c$. Together with $a+b+c+d=0$, we get a linear system whose only solution is the trivial one, a contradiction.

Now we can consider the basis formed by $\alpha$, $\beta$, and $\gamma$. We first prove that at least one of the following quadruples is weakly regular:
\begin{itemize}
	\item[(i)] $(d, a, b, c)$
	\item[(ii)] $(d, a, c, b)$
	\item[(iii)] $(a, d, c, b)$, with $d+c=a$
\end{itemize}
In fact, about (i), $d \neq a$ and $d \neq c$ because $\alpha$ and $\gamma$ are not identities. Let see when $(d+a, b, c)$ and $(d+c, b, a)$ are regular. If $(d+a, b, c)$ were special, then $\alpha \sim_{I} \beta$ by lemma \ref{LemmaTripleNotId}, a contradiction. If $(d+c, b, a)$ is special and $\delta := [x_{4}^{d}, x_{3}^{c}, x_{1}^{a}, x_{2}^{b}]$ is not an identity, then $\gamma \sim_{I} \delta$ by lemma \ref{LemmaTripleNotId} and $\delta \sim_{I} \beta$ by lemma \ref{LemmaQuadrNotId}-i, a contradiction. If $\delta$ is an identity, then $d+c=a$ or $d+c+a=b$. The latter case makes $\beta$ an identity, hence only the former is possible. Therefore, assuming $d+c=a$, let us see when (ii) is weakly regular. Since $d+c=a$ and $a+b+c+d=0$, we have
\begin{equation}\label{QuadrupleII}\tag{$\#$}
	(a, b, c, d) = (a, -2a, a-d, d).
\end{equation}
We already know that $d \neq a$ and $(d+a, b, c)$ is regular, hence we have to establish when $d \neq b$ and $(d+b, a, c)$ is regular. If $d = b$, then we get $(a, b, c, d) = (a, -2a, 3a, -2a)$, and the reader can check that we fall in case (iii). Assuming $d \neq b$, from \eqref{QuadrupleII} we get $(d+b, a, c) = (d-2a, a, a-d)$. There are the following possibilities for the latter to be special: (1) $d-2a=a$---that is, $d = 3a$---that would make $(d+a, b, c) = (4a, -2a, -2a)$ special; (2) $d-2a=a-d$---that is, $2d=3a$---leading to $(a, b, c, d) = (-2c, 4c, c, -3c)$; the reader can verify that we fall in case (iii); (3) $a=a-d$---that is, $d = 0$---making $\beta$ an identity, since $d+a=c$; (4) $(d-2a)+a=a-d$---that is, $a=d$---making $\alpha$ and $\beta$ identities; (5) $(d-2a)+(a-d)=a$---that is, $a=0$---making $\gamma$ an identity, since $d+c=b$; and (6) $a+(a-d)=d-2a$---that is, $d=2a$---so that $(a, b, c, d) = (a, -2a, -a, 2a)$. In the latter case, we have $\gamma = [x_{4}^{2a}, x_{3}^{-a}, x_{2}^{-2a}, x_{1}^{a}] \sim_{I} [x_{4}^{2a}, x_{2}^{-2a}, x_{3}^{-a}, x_{1}^{a}] \sim_{I} [x_{4}^{2a}, x_{2}^{-2a}, x_{1}^{a}, x_{3}^{-a}] \sim_{I} [x_{4}^{2a}, x_{1}^{a}, x_{2}^{-2a}, x_{3}^{-a}] = \alpha$, the first and the third equivalences being due to lemma \ref{LemmaQuadrNotId}-i and the second one to lemma \ref{LemmaTripleNotId}, since $(0, -a, a)$ is special.

We proved that (i), (ii), or (iii) is weakly regular. If (i) is, then identity \eqref{PolId4}, with $(a, b, c, d)$ replaced by $(d, a, b, c)$, provides the linear combination $h_{1}\alpha + h_{2}\beta + h_{3}\gamma = 0$ in $V$, with $h_{1}, h_{3} \neq 0$, a contradiction. If (ii) is, then \eqref{PolId4}, with $(a, b, c, d)$ replaced by $(d, a, c, b)$, gives $h_{1} \beta + h_{2} \alpha + h_{3} [x_{4}^{d}, x_{2}^{b}, x_{3}^{c}, x_{1}^{a}] = 0$, with $h_{1}, h_{3} \neq 0$; by applying lemma \ref{LemmaQuadrNotId}-i to the last term (that is not an identity, see the line after \eqref{PolId4}), we get $h_{1} \beta + h_{2} \alpha + h_{3} \gamma = 0$, a contradiction. If (iii) is, then \eqref{PolId4}, with $(a, b, c, d)$ replaced by $(a, d, c, b)$, leads to
\begin{equation}\label{ItemiiiEq1}\tag{$\star$}
	-h_{1} \beta - h_{2} \alpha + h_{3} [x_{1}^{a}, x_{2}^{b}, x_{3}^{c}, x_{4}^{d}] = 0.
\end{equation}
We apply identity \eqref{IdLie4} to the third commutator, getting
\begin{align}
	[x_{1}^{a}, x_{2}^{b}, x_{3}^{c}, x_{4}^{d}] &= -\bigl([x_{2}^{b}, x_{1}^{a}, x_{4}^{d}, x_{3}^{c}] + [x_{4}^{d}, x_{3}^{c}, x_{2}^{b}, x_{1}^{a}] + [x_{3}^{c}, x_{4}^{d}, x_{1}^{a}, x_{2}^{b}]\bigr) \nonumber \\
	&= [x_{1}^{a}, x_{2}^{b}, x_{4}^{d}, x_{3}^{c}] - [x_{4}^{d}, x_{3}^{c}, x_{2}^{b}, x_{1}^{a}] + [x_{4}^{d}, x_{3}^{c}, x_{1}^{a}, x_{2}^{b}]. \label{ItemiiiEq2}\tag{$\star\star$}
\end{align}
The first commutator in \eqref{ItemiiiEq2} is not an identity (see the line after \eqref{PolId4}) and it is equivalent to $[x_{1}^{a}, x_{4}^{d}, x_{2}^{b}, x_{3}^{c}] = -\alpha$ by lemma \ref{LemmaQuadrNotId}-i. The second commutator is $\gamma$. The third commutator is an identity because of the hypothesis $d+c=a$. Hence, \eqref{ItemiiiEq1} becomes $-h_{1}\beta - h_{2}\alpha + h_{3}(-\alpha-\gamma) = 0$---that is, $-(h_{2}+h_{3})\alpha - h_{1}\beta - h_{3}\gamma = 0$. This is a contradiction, since $h_{1}, h_{3} \neq 0$.

\vspace{3pt} \emph{Step 3:} $\dim V = 0$. Since $V \subset W$, the thesis is trivial if $\dim W = 0$. Hence, because of the previous step, we have to analyse the two cases $\dim W = 1$ and $\dim W = 2$.

\vspace{3pt} \texttt{Case I:} $\dim W = 1$. Let us suppose by contradiction that $\dim V = 1$---that is, $V = W$ and $P_{4}^{\pmb{a}} = \bar{P}_{4}^{\pmb{a}}$---and, up to a permutation of the indices, let $\alpha := [x_{4}^{d}, x_{1}^{a}, x_{2}^{b}, x_{3}^{c}]$ be a representative of a basis of $V$. Since $\alpha \in \bar{P}_{4}^{\pmb{a}}$, it is an identity, hence it belongs to $I$ by proposition \ref{PropMultiCom}, a contradiction.

\vspace{3pt} \texttt{Case II:} $\dim W = 2$. We start by assuming that the triple $(d+a, b, c)$ is regular and $d \neq a$. We set
\begin{equation}\label{BasisAlphaBeta}
	\alpha := [x_{4}^{d}, x_{1}^{a}, x_{2}^{b}, x_{3}^{c}] \qquad \beta := [x_{4}^{d}, x_{1}^{a}, x_{3}^{c}, x_{2}^{b}]
\end{equation}
In this case, we claim that $\alpha$ and $\beta$ form a basis of $W$. In fact, otherwise, they are not linearly independent, hence there exist non-trivial $\lambda, \mu \in \K$ such that $\lambda\alpha + \mu\beta \in I$. The hypotheses stated imply that $\alpha$ and $\beta$ are not identities, thus $\lambda$ and $\mu$ are both different from $0$. Since $\lambda\alpha + \mu\beta$ is an identity, $\lambda[x_{5}^{d+a}, x_{2}^{b}, x_{3}^{c}] + \mu[x_{5}^{d+a}, x_{3}^{c}, x_{2}^{b}]$ is an identity of degree $3$, that belongs to $K$ by proposition \ref{PropMultiL}. Nevertheless, since $\lambda, \mu \neq 0$ and $(d+a, b, c)$ is weak and regular, we fall in case II of the proof of proposition \ref{PropMultiL}, forbidding that $\lambda \alpha + \mu \beta$ is an identity, a contradiction.

Since $\alpha$ and $\beta$ represent a basis of $W$, any non trivial element of $V$ is of the form $p = \lambda \alpha + \mu \beta$ (up to $I$), with $\lambda, \mu \in \K$. It follows that $q := \lambda [x_{5}^{a+d}, x_{2}^{b}, x_{3}^{c}] + \mu [x_{5}^{a+d}, x_{3}^{c}, x_{2}^{b}]$ is an identity too, and $p$ belongs to the $T$-ideal generated by $q$. Since $q \in I$ by proposition \ref{PropMultiL}, we have $p \in I$, a contradiction. Thus, $\dim V = 0$.

It remains to prove that assuming $(d+a, b, c)$ regular and $d \neq a$ is not restrictive up to a permutation of $(a, b, c, d)$. Since $\dim W = 2$ by hypothesis, there exists a commutator that is not an identity (otherwise, it would belong to $I$ by proposition \ref{PropMultiCom}). Up to a permutation of $(a, b, c, d)$, let us suppose that it is $\alpha := [x_{4}^{d}, x_{1}^{a}, x_{2}^{b}, x_{3}^{c}]$. It follows that $d \neq a$, $d+a \neq b$, and $d+a+b \neq c$. Hence, if $(d+a, b, c)$ is special, then we have the following possibilities: (1) $d+a=c$; (2) $b=c$; (3) $d+a+c=b$; and (4) $d+a=b+c$. Moreover, since $\dim W = 2$, we can find a basis of $W$ represented by $\alpha$ and $\eta = [x_{4}^{d}, \ldots]$. Let us show that, in each of the cases (1)--(4), there exists a permutation $\sigma$ such that $\sigma(d) \neq \sigma(a)$ and $(\sigma(d)+\sigma(a), \sigma(b), \sigma(c))$ is regular.

\vspace{3pt} (1) Since $a+b+c+d=0$, we get $(a, b, c, d) = (c-d, -2c, c, d)$. Let us see when $d \neq c$ and $(d+c, a, b)$ is regular. If $d = c$, then we get $(a, b, c, d) = (0, -2d, d, d)$, so that the only possibilities for $\eta$ are $[x_{4}^{d}, x_{1}^{a}, x_{3}^{c}, x_{2}^{b}]$, which is an identity (since $d+a=c$), $[x_{4}^{d}, x_{2}^{b}, x_{1}^{a}, x_{3}^{c}]$, which is equivalent to $\alpha$ by lemma \ref{LemmaQuadrNotId}-i, and $[x_{4}^{d}, x_{2}^{b}, x_{3}^{c}, x_{1}^{a}]$, which is an identity (since $d+b+c=a$). Hence, $d \neq c$. Let us show when $(d+c, a, b)$ is special. We have six possibilities: (i) $d+c=a$---that is, $d=0$; (ii) $d+c=b$---that is, $d=-3c$; (iii) $a=b$---that is, $d=3c$; (iv) $d+c+a=b$---that is, $c=0$; (v) $d+c+b=a$---that is, $d=c$; and (vi) $a+b=d+c$---that is, $d=-c$. In case (i), we get $(a, b, c, d) = (c, -2c, c, 0)$. We have $\alpha = [x_{4}^{0}, x_{1}^{c}, x_{2}^{-2c}, x_{3}^{c}]$ and the only possibilities for $\eta$ that are not identities are $[x_{4}^{0}, x_{2}^{-2c}, x_{1}^{c}, x_{3}^{c}]$, that is equivalent to $\alpha$ by lemma \ref{LemmaQuadrNotId}-i, $[x_{4}^{0}, x_{2}^{-2c}, x_{3}^{c}, x_{1}^{c}]$, that is equivalent to the previous one by lemma \ref{LemmaTripleNotId} (since $(-2c, c, c)$ is special), and $[x_{4}^{0}, x_{3}^{c}, x_{2}^{-2c}, x_{1}^{c}]$, that is equivalent to the previous one by lemma \ref{LemmaQuadrNotId}-i. In case (ii), we get $(a, b, c, d) = (4c, -2c, c, -3c)$, so that $d \neq b$ and $(d+b, a, c)$ is regular. In case (iii), we get $(a, b, c, d) = (-2c, -2c, c, 3c)$, so that $a \neq c$ and $(a+c, b, d)$ is regular. In case (iv), we get $(a, b, c, d) = (-d, 0, 0, d)$, making $\alpha$ an identity (since $d+a=b$). Case (v) has been already excluded above. In case (vi), we get $(a, b, c, d) = (-2d, 2d, -d, d)$, so that $d \neq b$ and $(d+b, a, c)$ is regular.

\vspace{3pt} (2) We get $(a, b, c, d) = (-2c-d, c, c, d)$. If $d = c$, then we get $(a, b, c, d) = (-3d, d, d, d)$, so that $\alpha = [x_{4}^{d}, x_{1}^{-3d}, x_{2}^{d}, x_{3}^{d}]$ and the only possibility for $\eta$ is $[x_{4}^{d}, x_{1}^{-3d}, x_{3}^{d}, x_{2}^{d}]$, that is equivalent to $\alpha$ by \ref{LemmaTripleNotId} (since $(-2d, d, d)$ is special). Let us show when $(d+c, a, b)$ is special. We have six possibilities: (i) $d+c=a$---that is, $2d=-3c$; (ii) $d+c=b$---that is, $d=0$; (iii) $a=b$---that is, $d=-3c$; (iv) $d+c+a=b$---that is, $c=0$; (v) $d+c+b=a$---that is, $d=-2c$; and (vi) $a+b=d+c$---that is, $d=-c$. In case (i), we get $(a, b, c, d) = (a, -2a, -2a, 3a)$, so that $a \neq b$ and $(a+b, c, d) = (-a, -2a, 3a)$ is regular. In case (ii), we get $(a, b, c, d) = (-2c, c, c, 0)$. We have $\alpha = [x_{4}^{0}, x_{1}^{-2c}, x_{2}^{c}, x_{3}^{c}]$. The only possibilities for $\eta$ that are not identities are $[x_{4}^{0}, x_{1}^{-2c}, x_{3}^{c}, x_{2}^{c}]$, that is equivalent to $\alpha$ by lemma \ref{LemmaTripleNotId}; $[x_{4}^{0}, x_{3}^{c}, x_{1}^{-2c}, x_{2}^{c}]$, that is equivalent to previous one by lemma \ref{LemmaQuadrNotId}-i; and $[x_{4}^{0}, x_{2}^{c}, x_{1}^{-2c}, x_{3}^{c}]$, that is equivalent to $\alpha$ by lemma \ref{LemmaQuadrNotId}-i. In case (iii), we have $(a, b, c, d) = (c, c, c, -3c)$, so that $\alpha = [x_{4}^{-3c}, x_{1}^{c}, x_{2}^{c}, x_{3}^{c}]$ and there are no possibilities for $\eta$. In case (iv), we have $(a, b, c, d) = (-d, 0, 0, d)$, making $\alpha$ an identity. In case (v), we have $(a, b, c, d) = (0, c, c, -2c)$, so that $\alpha = [x_{4}^{-2c}, x_{1}^{0}, x_{2}^{c}, x_{3}^{c}]$ and the only possibilities for $\eta$ that are not identities are $[x_{4}^{-2c}, x_{2}^{c}, x_{1}^{0}, x_{3}^{c}]$, that is equivalent to $\alpha$ by lemma \ref{LemmaQuadrNotId}-i; $[x_{4}^{-2c}, x_{1}^{0}, x_{3}^{c}, x_{2}^{c}]$, that is equivalent to $\alpha$ by lemma \ref{LemmaTripleNotId}; and $[x_{4}^{-2c}, x_{3}^{c}, x_{1}^{0}, x_{2}^{c}]$, that is equivalent to the previous one by lemma \ref{LemmaQuadrNotId}-i. In case (vi), we have $(a, b, c, d) = (-c, c, c, -c)$, making $\alpha$ an identity (since $d = a$).

\vspace{3pt} (3) We get $(a, b, c, d) = (-c-d, 0, c, d)$. If $d = 0$, then we get $(a, b, c, d) = (-c, 0, c, 0)$, so that $\alpha = [x_{4}^{0}, x_{1}^{-c}, x_{2}^{0}, x_{3}^{c}]$ and the only possibilities for $\eta$ are $[x_{4}^{0}, x_{1}^{-c}, x_{3}^{c}, x_{2}^{0}]$, that is an identity, $[x_{4}^{0}, x_{3}^{c}, x_{1}^{-c}, x_{2}^{0}]$, that is an identity too, and $[x_{4}^{0}, x_{3}^{c}, x_{2}^{0}, x_{1}^{-c}]$, that is equivalent to $\alpha$ by identity \eqref{IdLie4}. Hence, $d \neq 0 = b$. Let us show when $(d+b, a, c)$ is special. We have six possibilities: (i) $d+b=a$---that is, $c=-2d$; (ii) $d+b=c$---that is, $d=c$; (iii) $a=c$---that is, $d = -2c$; (iv) $d+b+a=c$---that is, $c=0$; (v) $d+b+c=a$---that is, $d=-c$; and (vi) $d+b=a+c$---that is, $d=0$. In case (i), we have $(a, b, c, d) = (d, 0, -2d, d)$, making $\alpha$ an identity (since $d=a$). In case (ii), we have $(a, b, c, d) = (-2d, 0, d, d)$, so that $\alpha = [x_{4}^{d}, x_{1}^{-2d}, x_{2}^{0}, x_{3}^{d}]$ and the only possibilities for $\eta$ are $[x_{4}^{d}, x_{1}^{-2d}, x_{3}^{d}, x_{2}^{0}]$, that is an identity, $[x_{4}^{d}, x_{2}^{0}, x_{1}^{-2d}, x_{3}^{d}]$, that is equivalent to $\alpha$ by lemma \ref{LemmaQuadrNotId}-i, and $[x_{4}^{d}, x_{2}^{0}, x_{3}^{d}, x_{1}^{-2d}]$, that is an identity (since $d+0=d$). In case (iii), we have $(a, b, c, d) = (c, 0, c, -2c)$, so that $\alpha = [x_{4}^{-2c}, x_{1}^{c}, x_{2}^{0}, x_{3}^{c}]$ and the only possibilities for $\eta$ are $[x_{4}^{-2c}, x_{1}^{c}, x_{3}^{c}, x_{2}^{0}]$, that is an identity, and $[x_{4}^{-2c}, x_{2}^{0}, x_{3}^{c}, x_{1}^{c}]$, that is equivalent to $\alpha$ by lemma \ref{LemmaQuadrNotId}-i. In case (iv), we have $(a, b, c, d) = (-d, 0, 0, d)$, making $\alpha$ an identity (since $d+a=c$). In case (v), we have $(a, b, c, d) = (0, 0, -d, d)$, so that $\alpha = [x_{4}^{d}, x_{1}^{0}, x_{2}^{0}, x_{3}^{-d}]$ and the only possibilities for $\eta$ are $[x_{4}^{d}, x_{1}^{0}, x_{3}^{-d}, x_{2}^{0}]$ and $[x_{4}^{d}, x_{3}^{-d}, x_{1}^{0}, x_{2}^{0}]$, that are identities. Case (vi) has already been excluded.

\vspace{3pt} (4) We get $(a, b, c, d) = (-d, -c, c, d)$. If $d = c$, then we have $(a, b, c, d) = (-d, -d, d, d)$, so that $\alpha = [x_{4}^{d}, x_{1}^{-d}, x_{2}^{-d}, x_{3}^{d}]$ and the only possibility for $\eta$ is $[x_{4}^{d}, x_{1}^{-d}, x_{3}^{d}, x_{2}^{-d}]$, that is equivalent to $\alpha$ by lemma \ref{LemmaQuadrNotId}-ii (since $(0, d, -d)$ is special). Let us show when $(d+c, a, b)$ is special. We have six possibilities: (i) $d+c=a$---that is, $c=-2d$; (ii) $d+c=b$---that is, $d=-2c$; (iii) $a=b$---that is, $d=c$; (iv) $d+c+a=b$---that is, $c=0$; (v) $d+c+b=a$---that is, $d=0$; and (vi) $a+b=d+c$---that is, $d=-c$. In case (i), we get $(a, b, c, d) = (-d, 2d, -2d, d)$, so that $d \neq b$ and $(d+b, a, c) = (3d, -d, -2d)$ is regular. In case (ii), we get $(a, b, c, d) = (2c, -c, c, -2c)$, so that $a \neq c$ and $(a+c, b, d) = (3c, -c, -2c)$ is regular. Case (iii) has already been excluded. Case (iv) coincides with case (3)-(iv). In case (v), we have $(a, b, c, d) = (0, -c, c, 0)$, making $\alpha$ an identity (since $a = d$). In case (vi), we have $(a, b, c, d) = (-d, d, -d, d)$, so that $\alpha = [x_{4}^{d}, x_{1}^{-d}, x_{2}^{d}, x_{3}^{-d}]$ and the only possibility for $\eta$ is $[x_{4}^{d}, x_{1}^{-d}, x_{3}^{-d}, x_{2}^{d}]$, that is equivalent to $\alpha$ by lemma \ref{LemmaQuadrNotId}-ii (since $(0, -d, d)$ is special).
\end{proof}

The extension of proposition \ref{PropMultId4} to identities of any degree can be found in \cite[Lemma 3.11--Theorem 3.19]{FDK}.


\section{Infinite Field of Positive Characteristic}

Now we assume that $\K$ is an infinite field of characteristic $p \neq 2, 3$.

\begin{Not*} From now on, given $n \in \Z$, we denote by $\bar{n}$ its projection in $\Z_{p}$.
\end{Not*}

The Virasoro algebra is still defined by \eqref{StructVirasoro}, assuming $K \in \Z_{p}^{*}$ and projecting the integral coefficients $m-n$ and $m(m^{2}-1)$ to $\Z_{p}$ (while $\delta_{m+n, 0} = 1$ if and only if $m+n = 0$ in $\Z$, not in $\Z_{p}$). The restriction on the characteristic is due to the fact that, if $p = 2$ or $p = 3$, then the Virasoro algebra coincides with $U_{1}$, since $C_{m}$ vanishes for every $m$.

\subsection{Multihomogeneous \emph{vs} Multilinear Identities}

In general, when the characteristic of the underlying field is positive, the T-ideal formed by the (graded) polynomial identities of a fixed algebra is generated by its multihomogeneous elements, not necessarily by its multilinear ones. Nevertheless, dealing with the Virasoro algebra, the multilinear identities generate the corresponding T-ideal. Essentially, this is due to the fact that the vector subspace of $\LL$ formed by the elements of any fixed degree is one-dimensional up to the centre. We obtain the following proposition.

\begin{Prop} The T-ideal $T_{\Z}(\LL)$ is generated by its multilinear elements.
\end{Prop}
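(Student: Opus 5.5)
Over an infinite field $T_{\Z}(\LL)$ is generated by its multihomogeneous elements, so it suffices to show that every multihomogeneous graded identity $f$ lies in the $T$-ideal $M$ generated by the multilinear identities. The idea is to relate $f$ to its full linearization $\tilde f$, which is multilinear and is an identity, and then recover $f$ from $\tilde f$ by a substitution. In characteristic $p$ this recovery normally fails because of the factor $\prod_i d_i!$, where $d_i$ is the degree of $f$ in $x_i^{a_i}$. The plan is to bypass that factor using the structure of $\LL$: each homogeneous component is one-dimensional modulo the centre.

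\emph{Step 1: reduction to low degree in each variable.} Lie polynomials are linear combinations of left-normed commutators, so write $f$ as a combination of multihomogeneous commutators in which some variable $x^{a}$ occurs at least twice. Fix such a variable. In every evaluation, $x^{a}\mapsto \lambda L_a+\mu\hat c$, and $\hat c$ is central. Hence every commutator monomial containing $x^{a}$ at least twice, with another copy of $x^{a}$ adjacent in a suitable bracketing, is killed by \eqref{PolId1}.

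More precisely, replace each occurrence of $x^{a}$ by a distinct new variable $y_1^{a},\dots,y_d^{a}$. The polarized multilinear polynomial $g$ satisfies $g(x^{a},\dots,x^{a})=f$. I claim that $g$ is itself an identity whenever $f$ is. The reason is that on $\LL$ every evaluation of $y_j^{a}$ is a scalar multiple of $L_a$ modulo the centre. The centre is irrelevant in any commutator of length $\geq 2$, and the degree-one case is trivial. So $g(\lambda_1 L_a,\dots,\lambda_d L_a,\dots)=\lambda_1\cdots\lambda_d\, f(L_a,\dots)$, by multilinearity of $g$ and multihomogeneity of $f$; when $a=0$, the $\hat c$ components drop out in the same way. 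Thus $g$ vanishes on all evaluations.

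\emph{Step 2: conclusion.} Apply Step 1 to every variable. This yields a multilinear identity $\tilde g$ with $f=\tilde g(x_1^{a_1},\dots,x_1^{a_1},x_2^{a_2},\dots)$, which is a graded substitution, because all substituted variables have the correct degrees. Therefore $f\in M$, with no factorial factor appearing.

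The main obstacle is making Step 1 rigorous when $f$ is not given in monomial form. I would fix an expression of $f$ as a linear combination of commutator words and define $g$ by labelling the occurrences of $x^{a}$ in each word in order of appearance (position $j$ gets $y_j^{a}$). This makes $g(x^{a},\dots,x^{a})=f$ hold syntactically. The evaluation computation then uses only multilinearity and the one-dimensionality of $\LL_a$ modulo $\hat c$, together with the centrality of $\hat c$, which ensures that substituting $\lambda L_a+\mu\hat c$ gives the same value as $\lambda L_a$ whenever the variable sits inside a commutator of length at least $2$. The degenerate case of degree-one polynomials is immediate.
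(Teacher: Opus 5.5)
Your argument is correct and rests on exactly the fact the paper singles out: each $\LL_{a}$ is one-dimensional modulo the central element $\hat{c}$, with the details deferred to Lemma 4.1 of the cited work of Fidelis, Diniz and Koshlukov. Relabelling the occurrences of each variable produces a multilinear identity $\tilde g$ of which $f$ is a graded specialization, so no factorial coefficients arise. The opening remark of your Step 1, that commutators are killed by the first-family identities, is vague and unnecessary, but the ``More precisely'' argument stands on its own.
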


The proof can be found in \cite[Lemma 4.1]{FDK}.

\subsection{First Family}

We have the following identities:
\begin{equation}\label{PolId1CarP}
	\textcolor{blue}{[x_{1}^{a}, x_{2}^{b}]}, \quad \bar{a} = \bar{b}.
\end{equation}
That's because, if $\bar{m} = \bar{n}$, then $[L_{m}, L_{n}] = 0$ (and the generator $\hat{c}$ is central). In fact, if $n = m + kp$, then from \eqref{StructVirasoro} we get $[L_{m}, L_{m+kp}] = \delta_{2m+kp, 0} C_{\bar{m}} \hat{c}$. If $\delta_{2m+kp, 0} = 1$, then $p \mid 2m$, thus $p \mid m$, that is, $\bar{m} = 0$. This implies that $C_{\bar{m}} = 0$, thus $[L_{m}, L_{n}] = 0$. The converse holds too, since, if $[L_{m}, L_{n}] = 0$, then from \eqref{StructVirasoro} we get $\bar{m} - \bar{n} = 0$.

\begin{Prop}\label{PropMultiComCarP} If $[x_{1}^{a_{1}}, \ldots, x_{n}^{a_{n}}]$ is an identity of $\LL$, then it belongs to the $T$-ideal generated by the identities \eqref{PolId1CarP}.
\end{Prop}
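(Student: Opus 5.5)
The plan is to imitate the proof of proposition \ref{PropMultiCom}, by induction on $n$. The only new ingredient is a lemma controlling where the central term $\hat{c}$ can appear in characteristic $p$. Let $H$ be the $T$-ideal generated by \eqref{PolId1CarP}. The cases $n=1$ and $n=2$ are immediate: by the discussion after \eqref{PolId1CarP}, $[L_{a}, L_{b}] = 0$ if and only if $\bar{a} = \bar{b}$, and $\hat{c}$ is central. For the inductive step, set $b := a_{1} + \cdots + a_{n-1}$ and $p^{b} := [x_{1}^{a_{1}}, \ldots, x_{n-1}^{a_{n-1}}]$. If $p^{b}$ is an identity, then it lies in $H$ by induction, and hence so does $[p^{b}, x_{n}^{a_{n}}]$.

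Now suppose $p^{b}$ is not an identity. Since the polynomial is multilinear and each $\LL_{m}$ is spanned by $L_{m}$ (and $\hat{c}$ when $m=0$), it suffices to evaluate on basis elements. Substituting $\hat{c}$ anywhere kills every commutator of length at least $2$. So the relevant evaluation is $[L_{a_{1}}, \ldots, L_{a_{n-1}}] = \lambda L_{b} + \mu \hat{c}$, with $\mu \neq 0$ only if $b = 0$ in $\Z$.

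The key claim is that $\lambda = 0$ forces $\mu = 0$. By construction, $\lambda$ is a product of factors of the form $\overline{b' - a_{k}}$, where $b'$ is the partial sum $a_{1} + \cdots + a_{k-1}$. A $\hat{c}$-term is created only at the last step, and only if $b' + a_{n-1} = 0$ in $\Z$. If some earlier factor vanishes, everything before the last step is already zero, so both coefficients vanish. If instead the last factor vanishes, then $b' + a_{n-1} = 0$ gives $\overline{b' - a_{n-1}} = \overline{2b'} = 0$, so $p \mid b'$ because $p \neq 2$. Then $C_{b'}$, which only depends on $\bar{b'}$, is zero as well. Consequently, since $p^{b}$ is not an identity, $\lambda \neq 0$, and the central part is irrelevant inside a further commutator.

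The conclusion then follows as in characteristic zero. The polynomial $[p^{b}, x_{n}^{a_{n}}]$ is an identity exactly when $[L_{b}, L_{a_{n}}] = 0$, that is, when $\bar{b} = \bar{a}_{n}$. In that case $[x_{n+1}^{b}, x_{n}^{a_{n}}]$ is one of the identities \eqref{PolId1CarP}. Substituting $x_{n+1}^{b} \mapsto p^{b}$, which is a graded substitution since $p^{b}$ is homogeneous of degree $b$, shows that $[p^{b}, x_{n}^{a_{n}}] \in H$.

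I expect the main obstacle to be the claim that $\lambda = 0$ forces $\mu = 0$. It is exactly what rules out a non-identity $p^{b}$ whose values are all central, a case the characteristic-zero argument treated as immaterial; here it relies on $p \neq 2$ and on $C_{m}$ depending only on $\bar{m}$.
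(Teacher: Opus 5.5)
Your proof is correct and takes the same route as the paper, which just says the argument is the induction of Proposition \ref{PropMultiCom}, with the base case given by $[L_{a},L_{b}]=0 \iff \bar a=\bar b$. Your extra check that $\lambda = 0$ forces $\mu = 0$ (using $p\neq 2$ and the fact that $C_{m}$ depends only on $\bar m$) makes explicit the step the paper dismisses with ``the term $\hat{c}$ is immaterial''.
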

\begin{proof} Analogous to the proof of proposition \ref{PropMultiCom}, considering that the case $n=2$ was proven above.
\end{proof}

\subsection{Second Family}

Equations \eqref{EqLABC}--\eqref{EqLBCA} remain unchanged, projecting the coefficients to $\Z_{p}$. Lemma \ref{LemmaAlphaBetaZero} (with $\bar{\alpha}$ and $\bar{\beta}$ instead of $\alpha$ and $\beta$) and equation \eqref{IdAlphaBeta} do not change either. Hence, under the condition $a + b + c \neq 0$, the term $\delta_{a+b+c, 0}$ in \eqref{IdAlphaBeta} leads to the following identity:
\begin{equation}\label{PolId2CarP}
	\textcolor{blue}{\bar{\alpha} [x_{1}^{a}, x_{2}^{b}, x_{3}^{c}] - \bar{\beta} [x_{1}^{a}, x_{3}^{c}, x_{2}^{b}]}
\end{equation}

We already defined strong and weak triples in \ref{DefPiTr}. Now we add the following definitions.

\begin{Def}\label{DefPiTrCarP} A triple $(a, b, c) \in \Z^{3}$ is called \emph{bar-regular} if the three entries of $(\bar{a}, \bar{b}, \bar{c})$ are pairwise distinct and none of the three is the sum of the other two; otherwise, it is called \emph{bar-special}.
\end{Def}

The polynomial \eqref{PolId2CarP} is a non-trivial identity only for strong bar-regular triples; otherwise, it is not an identity or it is a multiple of one of the three commutators in \eqref{ThreeComm}, as the following adaptation of proposition \ref{PropTriple} shows (with essentially the same proof).

\begin{Prop}\label{PropTripleCarP} A triple $(a, b, c) \in \Z^{3}$ is:
\begin{enumerate}
	\item strong and bar-regular if and only if formula \eqref{PolId2CarP} is a non-vanishing identity and it is not a multiple of a commutator in \eqref{ThreeComm}; in this case, none of such commutators is an identity;
	\item weak and bar-regular if and only if formula \eqref{PolId2CarP} is not an identity; in this case, none of the commutators in \eqref{ThreeComm} is an identity either; and
	\item bar-special if and only if formula \eqref{PolId2CarP} is an identity and it is a multiple of a commutator in \eqref{ThreeComm}; in this case, at least one of the commutators in \eqref{ThreeComm} is an identity (even if formula \eqref{PolId2CarP} vanishes).
\end{enumerate}
Moreover:
\begin{itemize}
	\item[\emph{(ii)}] In case (3), $(a, b, c)$ is weak if and only if there exist $k, h \in \Z$ such that $(a, b, c)$ coincides with one of the following triples: $(k, k+hp, -2k-hp)$, $(k, -2k-hp, k+hp)$, $(-2k-hp, k, k+hp)$, $(k, -k-hp, hp)$, $(k, hp, -k-hp)$, $(hp, k, -k-hp)$.
	\item[\emph{(iii)}] Formula \eqref{PolId2CarP} vanishes if and only if there exist $k, h, l \in \Z$ such that $(a, b, c)$ takes one of the following forms: $(k, k+hp, k+lp)$, $(k, k+hp, lp)$, $(k, lp, k+hp)$, $(lp, k, k+hp)$.
\end{itemize}
\end{Prop}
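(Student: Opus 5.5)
I will follow the proof of proposition \ref{PropTriple} step by step, replacing $\alpha$, $\beta$ by $\bar{\alpha}$, $\bar{\beta}$ and \emph{regular} by \emph{bar-regular}. The key observation for Step 1 is that $\bar{\beta} = (\bar{b}-\bar{a})(\bar{c}-\bar{a}-\bar{b})$, $\bar{\alpha} = (\bar{c}-\bar{a})(\bar{b}-\bar{a}-\bar{c})$ and $\bar{\alpha}-\bar{\beta} = (\bar{c}-\bar{b})(\bar{a}-\bar{b}-\bar{c})$. Since $\Z_{p}$ is a field, $(a,b,c)$ is bar-regular if and only if $\bar{\alpha}, \bar{\beta}, \bar{\alpha}-\bar{\beta}$ are all nonzero. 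By the characteristic-$p$ version of lemma \ref{LemmaAlphaBetaZero}, this holds if and only if none of the commutators in \eqref{ThreeComm} is an identity. In the direction ($\Leftarrow$) of that lemma one uses \eqref{PolId1CarP}: for instance, $\bar{a}=\bar{b}$ kills $[L_a,L_b]$, and $\bar{c}=\bar{a}+\bar{b}$ kills $[L_{a+b},L_c]$ including its central part. As in Step 2 of the original proof, it then suffices to prove the forward implications of (1)--(3), because the three cases are mutually exclusive and exhaustive.

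For (1) and the strong part of (3), equation \eqref{IdAlphaBeta} immediately shows that \eqref{PolId2CarP} is an identity. For the weak case, I will redo the computation of item (2) modulo $p$, with $a+b+c=0$ in $\Z$. Then $\bar{b}-\bar{a}-\bar{c} = 2\bar{b}$ and $\bar{c}-\bar{a}-\bar{b} = 2\bar{c}$, so the coefficient of $\hat{c}$ becomes $2K(\bar{c}-\bar{a})\bar{b}(\bar{b}-\bar{a})\bar{c}(\bar{c}-\bar{b})\bar{a}$. Since $p \neq 2$ and $K \neq 0$, this vanishes exactly when one of these factors is zero in $\Z_{p}$. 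Each such vanishing is a bar-special condition: for example $\bar{a}=0$ is equivalent to $\bar{b}+\bar{c}=\bar{a}$, using $\bar{a}+\bar{b}+\bar{c}=0$. Hence a weak bar-regular triple does not give an identity, which proves (2). A weak bar-special triple gives an identity, and the Jacobi argument of item (3) then expresses \eqref{PolId2CarP} as a multiple of the commutator that vanishes.

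The main obstacle is the explicit lists (ii) and (iii), which replace the integer conditions of the original statement by congruences. For (ii), I will combine $a+b+c=0$ in $\Z$ with each bar-special condition. From $\bar{a}=\bar{b}$ we get $b=a+hp$ and hence $c=-2a-hp$, and similarly for the other equalities. From $\bar{a}=\bar{b}+\bar{c}$ we get $2\bar{a}=0$, so $a=hp$ and $b+c=-hp$; this gives the forms $(hp,k,-k-hp)$ and their permutations. For (iii), \eqref{PolId2CarP} vanishes if and only if $\bar{\alpha}=\bar{\beta}=0$. I will do this case by case on the factors: $\bar{a}=\bar{b}$ together with either $\bar{a}=\bar{c}$ or $\bar{b}=\bar{a}+\bar{c}$ (the latter means $\bar{c}=0$); $\bar{c}=\bar{a}+\bar{b}$ together with $\bar{c}=\bar{a}$ (so $\bar{b}=0$) or $\bar{b}=\bar{a}+\bar{c}$ (so $\bar{a}=0$, $\bar{b}=\bar{c}$). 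Each of these produces the listed families $(k,k+hp,k+lp)$, $(k,k+hp,lp)$, $(k,lp,k+hp)$, $(lp,k,k+hp)$. The only point needing care is that $\delta$ refers to vanishing in $\Z$ while all coefficients live in $\Z_{p}$.
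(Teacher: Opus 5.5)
Your proposal is correct and is essentially the paper's own proof: the paper gets this proposition by rerunning the proof of Proposition \ref{PropTriple} with $\bar{\alpha}$, $\bar{\beta}$ and bar-regularity in place of $\alpha$, $\beta$ and regularity, as you do, and your explicit case analysis for (ii) and (iii) (using $p \neq 2$ to pass from $2\bar{a} = 0$ to $\bar{a} = 0$) spells out the ``direct computation'' that the paper leaves to the reader. There are two cosmetic slips, neither affecting the argument: the $\hat{c}$-coefficient is $-2K(\bar{c}-\bar{a})\bar{b}(\bar{b}-\bar{a})\bar{c}(\bar{c}-\bar{b})\bar{a}$ rather than $+2K(\ldots)$, which is irrelevant for its vanishing; and the half of Lemma \ref{LemmaAlphaBetaZero} that needs \eqref{PolId1CarP} is the implication $\bar{\beta} = 0 \Rightarrow [L_{a}, L_{b}, L_{c}] = 0$, i.e.\ the direction ($\Rightarrow$) as the lemma is stated, not ($\Leftarrow$).
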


\begin{Not} We denote by $K$ the $T$-ideal generated by the identities \eqref{PolId1CarP} and \eqref{PolId2CarP}, assuming $(a, b, c)$ strong and bar-regular in the latter case.
\end{Not}

Lemma \ref{LemmaTripleNotId} and proposition \ref{PropMultiL} remain essentially unchanged. 

\subsection{Third Family}

Formula \eqref{IdAlphaBeta} shows that \eqref{PolId2CarP} is central, hence we get the following identity for every $d \in \Z$:
\begin{equation}\label{PolId3CarP}
	\textcolor{blue}{\bar{\alpha} [x_{1}^{a}, x_{2}^{b}, x_{3}^{c}, x_{4}^{d}] - \bar{\beta} [x_{1}^{a}, x_{3}^{c}, x_{2}^{b}, x_{4}^{d}]}.
\end{equation}
This identity is non-trivial when \eqref{PolId2CarP} is \emph{not} an identity---that is, when $(a, b, c)$ is weak and bar-regular. Moreover, if $\bar{d} = \bar{a} + \bar{b} + \bar{c}$, then the two quadruple commutators are both identities following from \eqref{PolId1CarP}, hence \eqref{PolId3CarP} is trivial.

\begin{Not} We denote by $J$ the $T$-ideal generated by the identities \eqref{PolId1CarP}, \eqref{PolId2CarP} with $(a, b, c)$ strong and bar-regular, and \eqref{PolId3CarP} with $(a, b, c)$ weak and bar-regular and $\bar{d} \neq \bar{a}+\bar{b}+\bar{c}$.
\end{Not}

Lemma \ref{LemmaQuadrNotId} holds in this framework too, with no essential variations (we stress that, in item ii, the hypothesis is still $a + b + c + d \neq 0$, without projecting it to $\Z_{p}$). About proposition \ref{PropMultId4Sp}, the structure of the proof remains unchanged, applying conditions \eqref{CondABCD1} and \eqref{CondABCD2} to $(\bar{a}, \bar{b}, \bar{c}, \bar{d})$. About \eqref{CondABCD1}, if each index coincides with the sum of the others, then $(\bar{a}, \bar{b}, \bar{c}, \bar{d}) = (0, 0, 0, 0)$, that is, $(a, b, c, d) = (k_{1}p, k_{2}p, k_{3}p, k_{4}p)$, so that proposition \ref{PropMultiComCarP} applies to every commutator in the variables $x_{1}^{a}, x_{2}^{b}, x_{3}^{c}, x_{4}^{d}$. The rest of the proof holds without variations if $p > 7$. In the case $p = 5$, the following items fail: III-Step 3-(2), because $[x_{4}^{3c}, x_{3}^{c}, x_{2}^{-c}, x_{1}^{2c}]$ is an identity (since $3c+c=-c$); III-Step 3-(6), because $[x_{1}^{a}, x_{3}^{3a}, x_{2}^{-a}, x_{4}^{2a}]$ is an identity (since $a+3a=-a$); IV-Step 4-(3), because $[x_{2}^{3c}, x_{1}^{c}, x_{3}^{-c}, x_{4}^{2c}]$ is an identity (since $3c+c=-c$);\footnote{In IV-Step 4-(5), $N_{\sigma}$ is an identity (since $d+4d+3d=-2d$), hence proposition \ref{PropMultiComCarP} applies.} and V-Step 4-(5), because $[x_{3}^{c}, x_{1}^{3c}, x_{2}^{-c}, x_{4}^{2c}]$ is an identity (since $c+3c=-c$). For these reasons, we have to add the following family:
\begin{equation}\label{PolId3Car5}
	\textcolor{blue}{[x_{1}^{a}, x_{2}^{b}, x_{3}^{c}, x_{4}^{d}] + 2 [x_{1}^{a}, x_{4}^{d}, x_{2}^{b}, x_{3}^{c}]}, \quad p = 5, \; (\bar{a}, \bar{b}, \bar{c}, \bar{d}) = (\bar{a}, -2\bar{a}, 2\bar{a}, -\bar{a}),
\end{equation}
assuming $\bar{a} \neq 0$ and $a+b+c+d \neq 0$. The proof that it is actually an identity can be realised by direct computation. Let us show that \eqref{PolId3Car5} settles the issues raised in the previous paragraph:
\begin{itemize}
	\item III-Step 3-(2): $N_{\sigma} = [x_{4}^{3c}, x_{2}^{-c}, x_{3}^{c}, x_{1}^{2c}]$. We apply \eqref{PolId3Car5} with $(a, b, c, d) = (3c, -c, c, 2c)$ and we get $N_{\sigma} \sim_{J} [x_{4}^{3c}, x_{1}^{2c}, x_{2}^{-c}, x_{3}^{c}]$, as required.
	\item III-Step 3-(6): $N_{\sigma} = [x_{4}^{2a}, x_{2}^{-a}, x_{3}^{3a}, x_{1}^{a}]$. By applying lemma \ref{LemmaQuadrNotId}, we get $N_{\sigma} \sim_{J} [x_{4}^{2a}, x_{3}^{3a}, x_{2}^{-a}, x_{1}^{a}] \sim_{J} [x_{4}^{2a}, x_{3}^{3a}, x_{1}^{a}, x_{2}^{-a}]$. We apply \eqref{PolId3Car5} with $(a, b, c, d)$ replaced by $(2a, a, -a, 3a)$ and we get $N_{\sigma} \sim_{J} [x_{4}^{2a}, x_{1}^{a}, x_{2}^{-a}, x_{3}^{3a}]$, as required.
	\item IV-Step 4-(3): $N_{\sigma} = [x_{4}^{2c}, x_{3}^{c}, x_{1}^{-c}, x_{2}^{3c}]$. We apply \eqref{PolId3Car5} with $(a, b, c, d) = (2c, c, -c, 3c)$ and we get $N_{\sigma} \sim_{J} [x_{4}^{2c}, x_{1}^{-c}, x_{2}^{3c}, x_{3}^{c}]$, as required.
	\item V-Step 4-(5): $N_{\sigma} = [x_{4}^{2c}, x_{3}^{c}, x_{2}^{-c}, x_{1}^{3c}]$. We apply \eqref{PolId3Car5} with $(a, b, c, d) = (2c, c, -c, 3c)$ and we get $N_{\sigma} \sim_{J} [x_{4}^{2c}, x_{2}^{-c}, x_{1}^{3c}, x_{3}^{c}]$. By applying lemma \ref{LemmaQuadrNotId}-i, we get $N_{\sigma} \sim_{J} [x_{4}^{2c}, x_{1}^{3c}, x_{2}^{-c}, x_{3}^{c}]$, as required.
\end{itemize}
In the remaining cases, $N_{\sigma}$ is an identity: in III-Step 3-(4), because $-2a = 3a$, and in IV-Step 4-(5) and V-Step 4-(6), because $3d+4d+d=-2d$.

Similarly, in the case $p = 7$, the following items fail: III-Step 3-(2), because $[x_{4}^{3c}, x_{3}^{c}, x_{1}^{2c}, x_{2}^{-c}]$ is an identity (since $3c+c+2c=-c$); III-Step 3-(6), because $[x_{4}^{2a}, x_{3}^{3a}, x_{1}^{a}, x_{2}^{-a}]$ is an identity; IV-Step 4-(3), because $[x_{2}^{3c}, x_{1}^{c}, x_{4}^{2c}, x_{3}^{-c}]$ is an identity; V-Step 4-(5), because $[x_{1}^{3c}, x_{3}^{c}, x_{4}^{2c}, x_{2}^{-c}]$ is an identity. For these reasons, we have to add the following family:

\vspace{-10pt}
\begin{small}
\begin{equation}\label{PolId3Car7}
	\textcolor{blue}{[x_{1}^{a}, x_{2}^{b}, x_{3}^{c}, x_{4}^{d}] - [x_{1}^{a}, x_{4}^{d}, x_{2}^{b}, x_{3}^{c}]}, \quad p = 7, \; (\bar{a}, \bar{b}, \bar{c}, \bar{d}) = (\bar{a}, 2\bar{a}, -2\bar{a}, 3\bar{a}), (\bar{a}, 3\bar{a}, -2\bar{a}, -3\bar{a}),
\end{equation}
\end{small}
\vspace{-12pt}

\noindent assuming $\bar{a} \neq 0$ and $a+b+c+d \neq 0$. Then:
\begin{itemize}
	\item III-Step 3-(2): $N_{\sigma} = [x_{4}^{3c}, x_{2}^{-c}, x_{3}^{c}, x_{1}^{2c}]$. We apply \eqref{PolId3Car7} with $(a, b, c, d) = (3c, -c, c, 2c)$, and we get $N_{\sigma} \sim_{J} [x_{4}^{3c}, x_{1}^{2c}, x_{2}^{-c}, x_{3}^{c}]$, as required.
	\item III-Step 3-(6): $N_{\sigma} = [x_{4}^{2a}, x_{2}^{-a}, x_{3}^{3a}, x_{1}^{a}]$. We apply \eqref{PolId3Car7} with $(a, b, c, d)$ replaced by $(2a, -a,$ $3a, a)$, and we get $N_{\sigma} \sim_{J} [x_{4}^{2a}, x_{1}^{a}, x_{2}^{-a}, x_{3}^{3a}]$, as required.
	\item IV-Step 4-(3): $N_{\sigma} = [x_{4}^{2c}, x_{3}^{c}, x_{1}^{-c}, x_{2}^{3c}]$. We apply \eqref{PolId3Car7} with $(a, b, c, d)$ replaced by $(2c, -c,$ $3c, c)$, and we get $N_{\sigma} \sim_{J} [x_{4}^{2c}, x_{1}^{-c}, x_{2}^{3c}, x_{3}^{c}]$, as required.
	\item V-Step 4-(5): $N_{\sigma} = [x_{4}^{2c}, x_{3}^{c}, x_{2}^{-c}, x_{1}^{3c}]$. We apply \eqref{PolId3Car7} with $(a, b, c, d)$ replaced by $(2c, -c, 3c,$ $c)$, and we get $N_{\sigma} \sim_{J} [x_{4}^{2c}, x_{2}^{-c}, x_{1}^{3c}, x_{3}^{c}]$. By applying lemma \ref{LemmaQuadrNotId}-i, we get $N_{\sigma} \sim_{J} [x_{4}^{2c}, x_{1}^{3c}, x_{2}^{-c}, x_{3}^{c}]$, as required.
\end{itemize}
Cases III-Step 3-(4), IV-Step 4-(5), and V-Step 4-(6) can be handled as for any $p > 7$.

\subsection{Fourth Family}

Formulas \eqref{IdAlphaBetaD1} and \eqref{IdAlphaBetaD2} keep on holding, thus we give the following definition.
\begin{Def} A quadruple $(a, b, c, d) \in \Z^{4}$ is called \emph{weakly bar-regular quadruple} if $\bar{a} \neq \bar{b}$, $\bar{a} \neq \bar{d}$, $a+b+c+d = 0$ and the triples $(a+b, c, d)$ and $(a+d, c, b)$ are bar-regular.
\end{Def}
In this case, we get the identity
\begin{equation}\label{PolId4CarP}
	\textcolor{blue}{\bar{h}_{1} [x_{1}^{a}, x_{2}^{b}, x_{3}^{c}, x_{4}^{d}] + \bar{h}_{2} [x_{1}^{a}, x_{2}^{b}, x_{4}^{d}, x_{3}^{c}] + \bar{h}_{3} [x_{1}^{a}, x_{4}^{d}, x_{3}^{c}, x_{2}^{b}]},
\end{equation}
where $\bar{h}_{1}, \bar{h}_{3} \neq 0$ and none of the commutators involved is an identity.

\begin{Not} We denote by $I$ the $T$-ideal generated by \eqref{PolId1CarP}, \eqref{PolId2CarP}, \eqref{PolId3CarP}, and \eqref{PolId4CarP}, adding \eqref{PolId3Car5} if $p = 5$ and \eqref{PolId3Car7} if $p = 7$.
\end{Not}

Proposition \ref{PropMultId4} holds in characteristic $p$ too, adapting the proof as follows. We still suppose that $a+b+c+d=0$ because of proposition \ref{PropMultId4Sp}. Step 1 remains unchanged. About step 2, we can still suppose that $\alpha$, $\beta$, and $\gamma$ represent a basis of $V$, since the hypothesis $a+b+c+d=0$ trivially implies $\bar{a}+\bar{b}+\bar{c}+\bar{d}=0$, that, together with the obstructions $\bar{d}+\bar{b}=\bar{a}$, $\bar{d}+\bar{c}=\bar{b}$, and $\bar{d}+\bar{a}=\bar{c}$ lead to a linear system whose only solution is the trivial one. In this case, we have $(a, b, c, d) = (k_{1}p, k_{2}p, k_{3}p, k_{4}p)$, so that every commutator in the variables $x_{1}^{a}, x_{2}^{b}, x_{3}^{c}$, and $x_{4}^{d}$ is an identity; thus, proposition \ref{PropMultiComCarP} applies. Moreover, let us show that one the quadruples (i), (ii), and (iii) is weakly bar-regular. We only have to analyse the special cases leading to (iii) in zero characteristic:
\begin{itemize}
	\item If $(a, b, c, d) = (a, -2a, 3a, -2a)$, then $(a+d, c, b) = (-a, 3a, -2a)$ and $(a+b, d, c) = (-a, -2a, 3a)$. If $p > 5$, then both are bar-regular and $\bar{a} \neq \bar{d}, \bar{b}$, thus (iii) applies; otherwise, $\bar{d} = \bar{c}$, thus $\gamma$ is an identity (equivalently, $(a+d, c, b)$ is bar-special, thus $\alpha \sim_{I} \beta$ by lemma \ref{LemmaTripleNotId}).
	\item If $(a, b, c, d) = (-2c, 4c, c, -3c)$, then $(a+d, c, b) = (-5c, c, 4c)$ and $(a+b, d, c) = (2c, -3c, c)$. If $p > 5$, then both are bar-regular and $\bar{a} \neq \bar{d}, \bar{b}$, thus (iii) applies; otherwise, $(a+d, c, b)$ is bar-special (since $-5\bar{c} = \bar{c} + 4\bar{c}$), thus $\alpha \sim_{I} \beta$ by lemma \ref{LemmaTripleNotId}.
	\item If $(a, b, c, d) = (a, -2a, -a, 2a)$ and $p > 5$, then the equivalence $\gamma \sim_{I} \alpha$ holds; if $p = 5$, then $\alpha$ is an identity, since $\bar{d} + \bar{a} = \bar{b}$.
\end{itemize}
The rest of step 2 holds without variations. About step 3, case I remains unchanged and, assuming $(d+a, b, c)$ bar-regular and $d \neq a$ in step II, we reach the same conclusion. About the proof that this assumption is not restrictive, we have to analyse the special cases.
\begin{itemize}
	\item (1)-ii: $(a, b, c, d) = (4c, -2c, c, -3c)$. If $p > 5$, then $d \neq b$ and $(d+b, a, c)$ is bar-regular. We observe that, if $p = 7$, then $\bar{d} = \bar{a}$, thus $\alpha$ is an identity. If $p = 5$, then $(a, b, c, d) = (-c, -2c, c, 2c)$. In this case, the only possibilities for $\eta$ that are not identities are $[x_{4}^{2c}, x_{2}^{-2c}, x_{1}^{-c}, x_{3}^{c}]$, that is equivalent to $\alpha$ by lemma \ref{LemmaQuadrNotId}-i; $[x_{4}^{2c}, x_{2}^{-2c}, x_{3}^{c}, x_{1}^{-c}]$, that is equivalent to the previous one by lemma \ref{LemmaTripleNotId} (since $(0, c, -c)$ is bar-special); and $[x_{4}^{2c}, x_{3}^{c}, x_{1}^{-c}, x_{2}^{-2c}]$, that is equivalent to the previous one because of identity \eqref{PolId3Car5}.
	\item (1)-iii: $(a, b, c, d) = (-2c, -2c, c, 3c)$. If $p > 5$, then $a \neq c$ and $(a+c, b, d)$ is bar-regular. If $p = 5$, then $\bar{d} = \bar{a}$, thus $\alpha$ is an identity.
	\item (1)-vi: $(a, b, c, d) = (-2d, 2d, -d, d)$. If $p > 5$, then $d \neq b$ and $(d+b, a, c)$ is bar-regular. If $p = 5$, then the only possibilities for $\eta$ that are not identities are $[x_{4}^{d}, x_{3}^{-d}, x_{1}^{-2d}, x_{2}^{2d}]$, that is equivalent to $\alpha$ because of identity \eqref{PolId3Car5}; $[x_{4}^{d}, x_{3}^{-d}, x_{2}^{2d}, x_{1}^{-2d}]$, that is equivalent to the previous one by lemma \ref{LemmaTripleNotId} (since $(0, 2d, -2d)$ is bar-special); and $[x_{4}^{d}, x_{2}^{2d}, x_{3}^{-d}, x_{1}^{-2d}]$, that is equivalent to the previous one by lemma \ref{LemmaQuadrNotId}-i.
	\item (2)-i: $(a, b, c, d) = (a, -2a, -2a, 3a)$. If $p > 5$, then $a \neq b$ and $(a+b, c, d)$ is bar-regular. If $p = 5$, then $(\bar{a}, \bar{b}, \bar{c}, \bar{d}) = (\bar{a}, -2\bar{a}, -2\bar{a}, -2\bar{a})$, so that the only possibility for $\eta$ that is not an identity is $[x_{4}^{3a}, x_{1}^{a}, x_{3}^{-2a}, x_{2}^{-2a}]$, that is equivalent to $\alpha$ by lemma \ref{LemmaTripleNotId}.
	\item (4)-i: $(a, b, c, d) = (-d, 2d, -2d, d)$. If $p > 5$, then $d \neq b$ and $(d+b, a, c) = (3d, -d, -2d)$ is bar-regular. If $p = 5$, then the only possibilities for $\eta$ that are not identities are $[x_{4}^{d}, x_{1}^{-d}, x_{3}^{-2d}, x_{2}^{2d}]$, that is equivalent to $\alpha$ by lemma \ref{LemmaTripleNotId}; $[x_{4}^{d}, x_{2}^{2d}, x_{1}^{-d}, x_{3}^{-2d}]$, that is equivalent to the previous one by lemma \ref{LemmaQuadrNotId}-i; and $[x_{4}^{d}, x_{3}^{-2d}, x_{2}^{2d}, x_{1}^{-d}]$, that is equivalent to the previous one because of identity \eqref{PolId3Car5}.
	\item (4)-ii: $(a, b, c, d) = (2c, -c, c, -2c)$. If $p > 5$, then $a \neq c$ and $(a+c, b, d) = (3c, -c, -2c)$ is bar-regular. If $p = 5$, then the only possibilities for $\eta$ that are not identities are $[x_{4}^{-2c}, x_{1}^{2c}, x_{3}^{c}, x_{2}^{-c}]$, that is equivalent to $\alpha$ by lemma \ref{LemmaTripleNotId}; $[x_{4}^{-2c}, x_{3}^{c}, x_{1}^{2c}, x_{2}^{-c}]$, that is equivalent to the previous one by lemma \ref{LemmaQuadrNotId}-i; and $[x_{4}^{-2c}, x_{2}^{-c}, x_{3}^{c}, x_{1}^{2c}]$, that is equivalent to the previous one because of identity \eqref{PolId3Car5}.
\end{itemize}
The extension to identities of any degree can be realized similarly to the case of zero characteristic, as stated in \cite[Theorem 4.12]{FDK}.


\section*{Aknowledgements}

I am grateful to my friend and colleague Dimas Gonçalves for having introduced me to p.i.-algebras and for many useful discussions.


\end{document}